\newtheorem{thm}{Theorem}[section]
\newtheorem{cor}[thm]{Corollary}
\newtheorem{lem}[thm]{Lemma}
\newtheorem{prop}[thm]{Proposition}
\theoremstyle{definition}
\newtheorem{defn}[thm]{Definition}
\theoremstyle{remark}
\newtheorem{rem}[thm]{Remark}
\numberwithin{equation}{section}
\newcommand{\R}{\mathbb R}
\newcommand{\N}{\mathbb N}
\newcommand{\simuleq}[1]{\left\{\begin{aligned}#1\end{aligned}\right.}%
\begin{document}%

\title[]{REGULARITY OF VISCOSITY SOLUTIONS DEFINED BY HOPF-TYPE FORMULA FOR  HAMILTON-JACOBI EQUATIONS }%
\author{NGUYEN HOANG}
\thanks{This research is partially supported by
the  Nafosted, Vietnam, under grant \# 101.02-2013.09 and by the project DHH2013-03-35.
}

\address{Department of Mathematics, College of Education, Hue University, 32 Le Loi, Hue, Vietnam}%
\email{nguyenhoanghue@gmail.com     or: nguyenhoang@hueuni.edu.vn}%

\keywords{Hamilton-Jacobi equation, Hopf-type formula, regularity, characteristics, viscosity solution, strip of differentiability}%

\begin{abstract}
Some properties of characteristic curves in connection with viscosity solutions of Hamilton-Jacobi equations $(H,\sigma)$ defined by Hopf-type formula $u(t,x)=\max_{q\in\R^n}\{
\langle x,q\rangle -\sigma^*(q)-\int_0^tH(\tau,q)d\tau\}$ are
studied. We investigate the points where the function $u(t,x)$ is differentiable, and the strip of the form
$(0,t_0)\times \R^n$ of the domain $\Omega$ where the viscosity solution $u(t,x)$ is continuously differentiable. Moreover, we present the propagation of singularity in forward of $u(t,x).$
\end{abstract}
\maketitle
\section{Introduction}
The notion of viscosity solution introduced by Crandall M.G. and
Lions P.L. in \cite{cl} plays a fundamental role in studying
Hamilton-Jacobi equations as well as the related problems such as
calculus of variation, optimal control, etc. By definition, a
viscosity solution of Hamilton-Jacobi equation is merely a
continuous function $u$ satisfying differential inequalities or $u$
is verified a such solution by $C^1$- test functions. As a result,
the relationship between viscosity solutions and classical solutions
is  a subtle matter. Therefore, many authors pay attention to
studying the regularity of viscosity solution in following meanings:
Under what conditions that the viscosity solution $u$ is locally
Lipschitz or differentiable (may be almost everywhere in the domain
of definition $\Omega$ of $u$); finding subregion $V\subset \Omega$
where $u\in C^1(V)$; investigating behaviour of sets where $u$ is
not differentiable, and so on. A large part of these studies is based on
the representation formulas of solutions where Hopf-Lax-Oleinik and
Hopf formulas are especially concerned.

Consider the Cauchy problem for Hamilton-Jacobi
equations of the form \begin{equation}\label{1.1}\frac{\partial u}{\partial t} + H(t,x,D_x u)=0\, , \,\, (t,x)\in \Omega=(0,T)\times \R^n,
\end{equation}
\begin{equation}\label{1.2}
u(0,x)=\sigma(x)\, , \,\, x\in \R^n.\end{equation}

If the Hamiltonian $H(t,x,p)$ is convex in $p,$ the problem
(\ref{1.1})-(\ref{1.2}) is investigated via method of calculus of variation, and
the representation of viscosity solution of Hamilton-Jacobi
equation by the value function associated to the problem of variation may be
considered as a generalized form of Hopf-Lax-Oleinik formula
\begin{equation}\label{1.3}u(t,x)=\min_{y\in \R^n} \Big\{ \sigma
(y)+tH^*\big (\frac {x-y}{t}\big)\Big \}, \end{equation} where
$H=H(p)$ is convex and superlinear, $\sigma$ is Lipschitz on $\R^n,$ here * denotes the Fenchel conjugate. 
Many results on the regularity of viscosity solutions in the case of
convex Hamiltonians are obtained, see \cite{ac,ac1,bcjs,fl}
especially \cite{cs} and references therein.
\smallskip

If $H$ is nonconvex, Hopf formula for viscosity solution of the problem (1.1)-(1.2) is
\begin{equation}\label{1.4}u(t,x)=\max_{q\in\R^n}\{
\langle x,q\rangle -\sigma^*(q)-tH(q)\}\end{equation} under the
assumptions that $H(t,x,p) = H(p)$ is a continuous  function,
 $\sigma (x)$
is convex and Lipschitz, see \cite{h,be,lr}.
\smallskip

A generalization of formula (\ref{1.4}) called Hopf-type formula is that
\begin{equation}\label {htf}u(t,x)=\max_{q\in\R^n}\{
\langle x,q\rangle -\sigma^*(q)-\int_0^tH(\tau,q)d\tau\}\end{equation} where $H=H(t,p)$ is a continuous and $\sigma$
is convex,  is a locally Lipschitz continuous function satisfying the initial condition and equation (\ref{1.1})
 at almost all points in the domain $\Omega,$  i.e., a {\it Lipschitz solution},
 but it is not a viscosity solution in general, see \cite{lr}. Recently, in \cite{nh2} we prove that (\ref{htf})
 defines a viscosity solution of the problem for a specific class of Hamiltonians $H=H(t,p).$
\smallskip

It is noticed that, in the problems of calculus of variation or optimal control,  Hamiltonians are concerned with the convexity
(or concavity) in the global setting, but in differential games, they are neither convex nor concave in general. In \cite{bafa},  Bardi M. and Faggian  S. presented explicit estimates below and above of the form ``maxmin" and ``minmax'' for the viscosity solutions  where either the Hamiltonians or the initial data are the sum of a convex and a concave function. If these estimates are equal, then a representation formula for the solution is obtained.
\smallskip

In this paper we study properties of characteristics of the Cauchy problem where $H=H(t,p)$ in connection with formula
(\ref{htf}). Then we present some results on the existence of strip of differentiability
of  the solution $u(t,x)$ given by this formula as well as the points at which $u(t,x)$ is not differentiable.
\smallskip

The structure of the paper is as follows. In section 2 we suggest a classification  of characteristic curves  at one
 point of the domain and then study the differentiability properties of Hopf-type formula $u(t,x)$ on these curves.
In section 3, we present the conditions related to characteristics so that $u(t,x)$ defined by (\ref{htf})
 is continuously differentiable on the strip $(0,t_0)\times \R^n.$
 Then we show that the singularities of solution  $u(t,x)$ may propagate forward from $t$-time $t_0$
  to the boundary of the domain.
\smallskip

This paper can be considered as a continuation of \cite{nh1} to the
case where dimension of state variable $n$ is greater than 1. The
results obtained here are new, even for Hamiltonian is independent
of $t.$ Our method is to exploit the relationship between
Hopf-type formula and characteristics based on the set of
maximizers.

\medskip

We use the following notations. For a positive number $T$, denote
$\Omega =(0,T)\times \R^n.$ Let $\, |\, .\, |$ and $\langle
.,.\rangle$ be the Euclidean norm and the scalar product in $\R^n$,
respectively. For a function $u:\ \Omega \to \R,$ we denote by
$D_xu$ the gradient of $u$ with respect to variable $x$, i.e., $D_xu
=(u_{x_1},\dots, u_{x_n}),$ and let $B'(x_0,r)$ be the closed ball centered at $x_0$ with radius $r.$ \vspace*{0.05in}

\section{The differentiability of Hopf-type formula and Characteristics}

We now  consider the  Cauchy problem for Hamilton-Jacobi equation:

 \begin{equation}\label{2.1}\frac{\partial u}{\partial t} + H(t, D_x u)=0\, , \,\,
(t,x)\in \Omega =(0,T)\times \R^n,\end{equation}
\begin{equation}\label{2.2}u(0,x)=\sigma(x)\, ,
\,\, x\in \R^n,\end{equation} where the Hamiltonian $H(t,p)$ is of
class $C([0,T]\times\R^n)$  and $\sigma (x)\in C(\R^n)$ is a convex function.

\medskip

Let $\sigma^*$ be the Fenchel conjugate of $\sigma.$ We denote by $D=  $  dom$\, \sigma^*=\{y\in \R^n\ |\, \sigma^*(y)<+\infty\}$ the effective domain of the convex function $\sigma^*.$

\medskip
We assume a  compatible condition for $H(t,p)$ and $\sigma
(x)$ as follows.

\medskip

(A1): {\it For every $(t_0,x_0)\in
[0,T)\times\R^n $,  there exist positive constants $r$ and $N$
such that
$$\langle x,p\rangle -\, \sigma ^* (p)-\int_0^t H(\tau,p)d\tau <
 \max_{|q|\le N}\{\langle x,q\rangle -\, \sigma ^* (q)-\int_0^t H(\tau,q)d\tau\},$$
whenever $ (t,x)\in [0,T)\times\R^n,\, |t-t_0|+|x-x_0|<r$ and
$| p| >N.$}

\medskip
From now on, we denote
\begin{equation}\label{2.3}u(t,x)=\max_{q\in \R^n}\, \{\langle x,q\rangle -\, \sigma^*
(q)-\int_0^t H(\tau,q)d\tau\}.\end{equation}
and
\begin {equation} \label {2.4}\varphi (t,x,q) =\langle
x,q\rangle -\, \sigma ^* (q)-\int_0^t H(\tau,q)d\tau,\ (t,x)\in \Omega,\ q\in \R^n .\end{equation}

For each $(t,x)\in \Omega,$ let $\ell (t,x)$ be the set of all $p\in
\R^n$ at which the maximum of the function $\varphi (t,x,\cdot)$ is
attained. In virtue of (A1), $\ell(t,x)\ne \emptyset.$

\medskip
{\it Remark.} If $\sigma (x)$ is
convex and Lipschitz on $\R^n$  then dom$\sigma^*$ is bounded. Hence condition (A1) is clearly satisfied, 
thus it can be considered as a
generalization of the hypotheses used earlier, see \cite {h,be}.

\medskip
 We record here a theorem that is necessary for
further presentation.
 \begin{thm}\cite{vht} Assume (A1). Then the function
$u(t,x)$ defined by (\ref{2.3})
 is a locally Lipschitz function satisfying equation (\ref{2.1}) a.e. in $\Omega$ and  $u(0,x)=\sigma(x),\ x\in \R^n.$ Furthermore, $u(t,x)$ is of class $C^1(V)$ in some open $V\subset \Omega$ if and only if for
every $(t,x)\in V,\ \ell(t,x)$ is a singleton.\end{thm}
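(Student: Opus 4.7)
The plan is to break the statement into four pieces: local Lipschitz continuity of $u$, the initial condition, validity of the PDE almost everywhere, and the $C^1$ characterization. Throughout I would exploit that, by (A1), the supremum defining $u$ is locally attained on a compact set, so envelope tools for families of smooth functions apply.

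First I would verify the Lipschitz property. Fix $(t_0,x_0)\in\Omega$ and let $r,N$ be the constants supplied by (A1); on $B'((t_0,x_0),r)$ one then has
\[
u(t,x)=\max_{|q|\le N}\varphi(t,x,q).
\]
For each $q$ with $|q|\le N$ the map $(t,x)\mapsto\varphi(t,x,q)$ is Lipschitz with constant $N+\max_{[0,T]\times B'(0,N)}|H|$, uniformly in $q$. A pointwise maximum of uniformly Lipschitz functions is Lipschitz with the same constant, which yields local Lipschitzness of $u$. The initial condition is then read off from the Fenchel--Moreau theorem: $u(0,x)=\sigma^{**}(x)=\sigma(x)$, since $\sigma$ is convex and continuous.

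Next I would obtain the PDE almost everywhere. Rademacher's theorem provides differentiability of $u$ at almost every $(t,x)\in\Omega$. At such a point choose any $p\in\ell(t,x)$, which is non-empty by (A1). The envelope inequality
\[
u(t+h,x+k)\ge\varphi(t+h,x+k,p)=\varphi(t,x,p)+\langle k,p\rangle-\int_t^{t+h}H(\tau,p)\,d\tau
\]
shows that $(-H(t,p),p)$ lies in the Fr\'echet superdifferential of $u$ at $(t,x)$; differentiability then forces $u_t(t,x)=-H(t,p)$ and $D_xu(t,x)=p$, so equation \eqref{2.1} is satisfied at $(t,x)$.

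The ``only if'' half of the final assertion is an immediate corollary of this envelope calculation: at any $(t,x)\in V$ and any $p\in\ell(t,x)$ one obtains $p=D_xu(t,x)$, forcing $\ell(t,x)$ to be the singleton $\{D_xu(t,x)\}$. The converse is the main obstacle, and I expect it to rest on upper semicontinuity of the multifunction $\ell$. Using (A1) to bound $\ell$ uniformly on a neighborhood of each $(t_0,x_0)\in V$, together with the continuity of $\varphi$, a standard argument shows that whenever $(t_n,x_n)\to(t_0,x_0)$ and $p_n\in\ell(t_n,x_n)$, every cluster point of $(p_n)$ lies in $\ell(t_0,x_0)$. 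If $\ell$ is single-valued on $V$, this promotes the selection $(t,x)\mapsto p(t,x)$ to a continuous map. Then $D_xu(t,x)=p(t,x)$ and $u_t(t,x)=-H(t,p(t,x))$ hold on a set of full measure in $V$, both right-hand sides are continuous, and the local Lipschitz regularity of $u$ lets one extend the identification to all of $V$, giving $u\in C^1(V)$.
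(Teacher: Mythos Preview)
The paper does not actually prove this theorem; it is quoted from \cite{vht} and recorded without argument (``We record here a theorem that is necessary for further presentation''). So there is no in-paper proof to compare your proposal against.

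Your outline is correct and matches the standard route one finds in \cite{vht} and \cite{vts}: use (A1) to localize the maximum to a fixed compact, get uniform Lipschitz bounds on $\varphi(\cdot,\cdot,q)$, conclude local Lipschitzness of the envelope; invoke Fenchel--Moreau for the initial datum; apply Rademacher and the envelope inequality to identify the gradient a.e.; and combine upper semicontinuity of $\ell$ with the single-valuedness hypothesis for the $C^1$ characterization.

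One small slip: the inequality $u(t+h,x+k)\ge\varphi(t+h,x+k,p)$ with equality at $(t,x)$ shows that $(-H(t,p),p)$ belongs to the \emph{sub}differential $D^{-}u(t,x)$, not the superdifferential as you wrote, since $\varphi(\cdot,\cdot,p)$ touches $u$ from below. This does not affect the argument, because at a point where $u$ is differentiable one has $D^{+}u=D^{-}u=\{Du\}$.

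For the ``if'' direction you could also appeal directly to what the paper proves next as Theorem~2.3: once $\ell(t,x)$ is a singleton, $u$ is differentiable at $(t,x)$ with gradient $(-H(t,p),p)$; together with continuity of $p(t,x)$ this immediately gives $u\in C^1(V)$ and avoids the a.e.-to-everywhere extension step. Your route via Rademacher plus continuous extension of the a.e.\ gradient is equally valid.
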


\begin{rem} If  $\ell (t_0,x_0)=\{p\}$  is a singleton, then all partial derivatives of $u(t,x)$ at $(t_0,x_0)$
 exist and $u_x(t_0,x_0)= p,\ u_t(t_0,x_0)=-H(t_0,p)$ see (\cite{vts}, p. 112). Moreover, we have:
\end{rem}

 \begin{thm} Assume (A1). Let $(t_0,x_0)\in \Omega$ such that $\ell(t_0,x_0)$ is a singleton. Then the function
$u(t,x)$ defined by (\ref{2.3}) is differentiable at $(t_0, x_0)$. \end{thm}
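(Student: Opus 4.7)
The plan is to prove Fr\'echet differentiability directly from the Hopf-type representation, by producing an explicit affine candidate and showing the remainder is $o(|t-t_0|+|x-x_0|)$. Let $p$ denote the unique element of $\ell(t_0,x_0)$ and set
\[
L(t,x)\ =\ u(t_0,x_0)\ +\ \langle p,\,x-x_0\rangle\ -\ H(t_0,p)(t-t_0).
\]
The Remark already identifies $p$ and $-H(t_0,p)$ as the partial derivatives, so $L$ is the only possible affine approximation; it remains to control the error jointly in $(t,x)$.

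For the lower bound, using $u(t,x)\ge\varphi(t,x,p)$ together with the equality $u(t_0,x_0)=\varphi(t_0,x_0,p)$ gives
\[
u(t,x)-u(t_0,x_0)\ \ge\ \langle x-x_0,p\rangle-\int_{t_0}^{t}H(\tau,p)\,d\tau,
\]
and continuity of $H(\cdot,p)$ at $\tau=t_0$ rewrites the integral as $H(t_0,p)(t-t_0)+o(|t-t_0|)$. For the matching upper bound, pick any maximizer $p_{t,x}\in\ell(t,x)$; the symmetric two-point comparison yields
\[
u(t,x)-u(t_0,x_0)\ \le\ \langle x-x_0,p_{t,x}\rangle-\int_{t_0}^{t}H(\tau,p_{t,x})\,d\tau.
\]

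The main technical point, and the chief obstacle, is showing $p_{t,x}\to p$ as $(t,x)\to(t_0,x_0)$. This is an upper-semicontinuity property of the set-valued map $\ell$: assumption (A1) furnishes a uniform bound $|p_{t,x}|\le N$ on a neighborhood of $(t_0,x_0)$, and the joint continuity of $\varphi$ (together with the continuity of $u$) forces any accumulation point of $p_{t_n,x_n}$ for $(t_n,x_n)\to(t_0,x_0)$ to maximize $\varphi(t_0,x_0,\cdot)$, hence to coincide with $p$. Once this convergence is in hand, I split $\langle x-x_0,p_{t,x}\rangle=\langle x-x_0,p\rangle+\langle x-x_0,p_{t,x}-p\rangle$, so that the second summand is $o(|x-x_0|)$, and I split the integral as
\[
\int_{t_0}^{t}H(\tau,p_{t,x})\,d\tau=\int_{t_0}^{t}H(\tau,p)\,d\tau+\int_{t_0}^{t}\bigl[H(\tau,p_{t,x})-H(\tau,p)\bigr]d\tau,
\]
where the first piece contributes $H(t_0,p)(t-t_0)+o(|t-t_0|)$ by continuity and the second is $o(|t-t_0|)$ by uniform continuity of $H$ on the compact set $[0,T]\times\{|q|\le N\}$. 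Combining the matching upper and lower bounds produces $u(t,x)-L(t,x)=o(|t-t_0|+|x-x_0|)$, which is precisely the Fr\'echet differentiability of $u$ at $(t_0,x_0)$.
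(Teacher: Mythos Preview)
Your proof is correct and follows essentially the same route as the paper's: both bound from below with the fixed maximizer $p$, bound from above with a varying maximizer $p_{t,x}\in\ell(t,x)$, and use (A1) together with the upper semicontinuity of $\ell$ to force $p_{t,x}\to p$. The only differences are cosmetic---the paper works with $\limsup/\liminf$ along sequences and applies the mean value theorem to the time integral, while you use $o(\cdot)$ notation and uniform continuity of $H$ on compacts; one small caution is that $\varphi$ need not be jointly \emph{continuous} in $q$ (since $\sigma^*$ is only lower semicontinuous), but upper semicontinuity in $q$ is exactly what your accumulation-point argument needs, so the reasoning stands.
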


\begin{proof}

By assumption, $\ell(t_0,x_0)=\{p\},$  put $p_t = - H(t_0,p).$  For $(h,k)\in \R\times \R^n$ small enough, let
$$\alpha =\limsup_{(h,k)\to (0,0)}\frac{u(t_0+h,x_0+k)-u(t_0,x_0) -p_t h -\langle p,k\rangle}{\sqrt{h^2+|k|^2}},$$

Then there exists a sequence $(h_m,k_m)_m\to 0$ such that $\lim_{m\to \infty} \Phi_m =\alpha,$ where
$$ \Phi_m = \frac{u(t_0+h_m,x_0+k_m)-u(t_0,x_0) -p_t h_m -\langle p,k_m\rangle}{\sqrt{h_m^2+|k_m|^2}}.$$

For each $m\in \N,$ we choose $p_m\in \ell(t_0+h_m,x_0+k_m)$ then
$$\aligned \Phi_m\ \le  &\ \frac{\varphi (t_0+h_m,x_0+k_m,p_m)-\varphi (t_0,x_0,p_m)-p_th_m -\langle p,k_m\rangle}{\sqrt{h_m^2+|k_m|^2}}\\
\le  &\  \frac{-h_m(p_t+H(\tau_m,p_m))  -\langle p_m-p,k_m\rangle}{\sqrt{h_m^2+|k_m|^2}},\endaligned$$
for some $\tau_m$ lying between  $t_0$ and $t_0+h_m;$ $\varphi(t,x,p)$ is given by (\ref{2.4}).
\smallskip

Taking into account the assumption (A1), it is easy to see that, for
$(h_m, k_m)$ small enough, the sequence $(p_m)_m$ is bounded, then
we can choose a subsequence also denoted by $(p_m)_m$ such that
$p_m\to p_0$ as $m\to \infty.$ Since the set-valued mapping
$(t,x)\mapsto \ell(t,x)$ is upper semicontinuous, see \cite{vht}, then
$p_0\in \ell(t_0,x_0),$ that is $p_0=p.$

Now, letting $m\to \infty$ we have
$$\alpha=\lim_{m\to\infty}\, \Phi_m \le \lim_{m\to\infty} \frac{-h_m(p_t+H(\tau_m,p_m)) -\langle p_m-p,k_m\rangle}{\sqrt{h_m^2+|k_m|^2}} =0.$$

On the other hand, let
$$\beta =\liminf_{(h,k)\to (0,0)}\frac{u(t_0+h,x_0+k)-u(t_0,x_0) -p_t h -\langle p,k\rangle}{\sqrt{h^2+|k|^2}}. $$

We have, for $p\in \ell (t_0,x_0)$
$$\aligned u(t_0+h,x_0+k)-u(t_0,x_0) &\ge \varphi (t_0+h,x_0+k,p)-\varphi (t_0,x_0,p) \\
&\ge -hH(\tau^*,p) +\langle p, k\rangle,\endaligned$$
where $\tau^*$ lies between $t_0$ and $t_0+h.$  Therefore
$$\beta \ge \liminf_{(h,k)\to (0,0)}\frac{-h(-p_t-H(\tau^*,p))}{\sqrt{h^2+|k|^2}} =0.$$

Thus,
$$\lim_{(h,k)\to (0,0)}\frac{u(t_0+h,x_0+k)-u(t_0,x_0) -p_t h -\langle p,k\rangle}{\sqrt{h^2+|k|^2}} =0,$$
which shows that $u(t,x)$ is differentiable at $(t_0,x_0).$

The theorem is then proved.
\end{proof}

\begin{defn} We call the function $u(t,x)$ given by (\ref{2.3}) the {\it Hopf-type formula} for Problem (\ref {2.1})-(\ref {2.2}). A point $(t_0,x_0)\in \Omega$ is said  {\it regular} for $u(t,x)$ if the function is differentiable at this point. Other point is said {\it singular} if at which, $u(t,x)$ is not differentiable.
\end{defn}
Consequently, by Theorem 2.3, we see that $(t_0,x_0) \in \Omega$ is regular if and only if $\ell(t_0,x_0)$ is a singleton.
\medskip

Next, in this section we focus on the study the differentiability of Hopf-type formula $u(t,x)$ on the characteristics. To this aim, let us recall the Cauchy method of characteristics for Problem (\ref{2.1})-(\ref{2.2}).
\medskip

From now on, we suppose additionally that $H(t,p)$ and $\sigma (x)$ are of class $C^1$ as a standing assumption.
\medskip

The characteristic differential equations of Problem
(\ref{2.1})-(\ref{2.2}) is as follows
\begin{equation}\label{2.5}
\dot x=H_p \ ;\qquad \dot v = \ \langle H_p,p\rangle
 - \ H \ ;\qquad \dot p=0,\end{equation}
with initial condition
\begin{equation}\label{2.6} x(0)=y \ ;\qquad v(0)=\sigma(y)\ ;\qquad p(0)=
\sigma _y(y)\ ,\quad y\in \R^n.\end{equation}

Then a characteristic strip of Problem
(\ref{2.1})-(\ref{2.2}) (i.e., a solution of the system of
differential equations (\ref{2.5}) - (\ref{2.6})) is defined by
\begin{equation}\label{2.7}\simuleq{x&=x(t,y)=y+\int_0^t H_p(\tau,\sigma_y(y))d\tau, \\
v&=v(t,y)=\sigma(y)+\int_0^t
\langle H_p(\tau,\sigma_y(y)),\sigma_y(y)\rangle d\tau -\int_0^tH(\tau,\sigma_y(y))d\tau,\\
 p&= p(t,y)\ =\ \sigma_y(y).}\end{equation}

The first component of solution (\ref{2.7}) is called the characteristic curve (briefly, characteristics) emanating from $(0,y)$ i.e., the curve defined by
\begin{equation}\label{2.8}\mathcal C:\ x=x(t,y)=y+\int_0^t H_p(\tau,\sigma_y(y))d\tau,\ t\in [0,T]. \end{equation}
\medskip

Let $(t_0,x_0) \in \Omega. $ Denoted by $\ell^*(t_0,x_0)$ the set of
all $y\in \R^n$ such that there is a characteristic curve emanating
from $(0,y)$ and passing the point $(t_0,x_0).$ We have $\ell (t_0,x_0)
\subset {\sigma}_y(\ell^*(t_0,x_0)),$ see \cite{nh1}. Therefore
$\ell^*(t_0,x_0) \ne \emptyset.$

\begin{prop}\label{dt} Let $(t_0,x_0)\in \Omega.$ Then a characteristic curve passing $(t_0,x_0)$ has form
\begin{equation}\label {2.9} x=x(t,y)=x_0+\int_{t_0}^t H_p(\tau,\sigma_y(y))d\tau,\ t\in [0,T], \end{equation}
for some $y\in \ell^*(t_0,x_0).$
\end{prop}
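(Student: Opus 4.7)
The plan is to unfold the definition of $\ell^*(t_0,x_0)$ and then perform a simple integral subtraction. By definition, a characteristic curve passing through $(t_0,x_0)$ is a curve of the form (\ref{2.8}) emanating from some point $(0,y)$ such that $x(t_0,y)=x_0$. The set of admissible base points $y$ is exactly $\ell^*(t_0,x_0)$, so the first step is simply to observe that any such curve is parameterized by some $y\in\ell^*(t_0,x_0)$ and, on the interval $[0,T]$, is given by
\[
x(t,y)=y+\int_0^t H_p(\tau,\sigma_y(y))\,d\tau.
\]

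The second step is to use the condition $x(t_0,y)=x_0$, which reads
\[
x_0=y+\int_0^{t_0}H_p(\tau,\sigma_y(y))\,d\tau.
\]
Subtracting this identity from the expression for $x(t,y)$ above and using additivity of the integral yields
\[
x(t,y)-x_0=\int_0^t H_p(\tau,\sigma_y(y))\,d\tau-\int_0^{t_0}H_p(\tau,\sigma_y(y))\,d\tau=\int_{t_0}^t H_p(\tau,\sigma_y(y))\,d\tau,
\]
which is precisely formula (\ref{2.9}).

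There is essentially no obstacle here: the statement is a direct reparametrization of (\ref{2.8}) based on the base point at time $t_0$ instead of time $0$, and the only thing to be careful about is keeping track of which $y$'s are allowed, namely those in $\ell^*(t_0,x_0)$ (which is guaranteed nonempty by the discussion preceding the proposition). The $C^1$ standing assumption on $H$ and $\sigma$ ensures that $H_p(\tau,\sigma_y(y))$ is a continuous function of $\tau$, so the integrals make classical sense and the subtraction step is valid without any regularization.
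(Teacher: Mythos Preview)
Your argument is correct and matches the paper's treatment of the forward direction essentially line for line: identify $y\in\ell^*(t_0,x_0)$ from the definition, write $x_0=y+\int_0^{t_0}H_p(\tau,\sigma_y(y))\,d\tau$, and subtract to obtain (\ref{2.9}).

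The one difference in scope is that the paper also proves the converse: given any $y\in\ell^*(t_0,x_0)$, the curve $\mathcal C_1$ defined by (\ref{2.9}) is in fact a characteristic curve (not merely some curve through $(t_0,x_0)$). The paper does this by noting that $\mathcal C_1$ and the characteristic $\mathcal C_2$ emanating from $(0,y)$ are both integral curves of the ODE $x'=H_p(t,\sigma_y(y))$ passing through $(t_0,x_0)$, hence coincide. You do not address this direction, and strictly speaking the statement as written does not demand it; but the paper treats the proposition as a full characterization, and later results (e.g.\ Theorem~\ref{cha1} and the proofs in Section~3) rely on knowing that every curve of the form (\ref{2.9}) with $y\in\ell^*(t_0,x_0)$ is a genuine characteristic. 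The converse is immediate from your own setup --- since $y\in\ell^*(t_0,x_0)$ forces $x_0=y+\int_0^{t_0}H_p(\tau,\sigma_y(y))\,d\tau$, substitution shows (\ref{2.9}) agrees with (\ref{2.8}) --- so one sentence would close this.
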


\begin{proof} Let $\mathcal C: \ x=x(t,y)=y+\int_0^t H_p(\tau,\sigma_y(y))d\tau$ be a characteristic curve passing $(t_0,x_0).$ By definition, $y\in \ell^*(t_0,x_0).$  Then we have

$$x_0=y+\int_0^{t_0} H_p(\tau,\sigma_y(y))d\tau$$
Therefore, $$x=x_0-\int_0^{t_0} H_p(\tau,\sigma_y(y))d\tau +\int_0^t H_p(\tau,\sigma_y(y))d\tau = x_0+\int_{t_0}^t H_p(\tau,\sigma_y(y))d\tau.$$

Conversely, let $\mathcal C_1: x=x(t,y)= x_0+\int_{t_0}^t H_p(\tau,\sigma_y(y))d\tau$ for $y\in \ell^*(t_0,x_0)$ be some curve passing $(t_0,x_0).$ Then  we can rewrite $\mathcal C_1$ as:
\begin{equation}\label{2.10} x=x_0-\int_0^{t_0} H_p(\tau,\sigma_y(y))d\tau +\int_0^t H_p(\tau,\sigma_y(y))d\tau.\end{equation}

On the other hand, let $\mathcal C_2:$
\begin{equation}\label{2.11} x= y +\int_0^t H_p(\tau,\sigma_y(y))d\tau\end{equation}
be a characteristic curve also passing $(t_0,x_0).$ Besides that, both $\mathcal C_1,\ \mathcal C_2$ are integral curves of the ODE $x' =H_p(t,\sigma_y(y)),$ thus they must coincide. This proves the proposition.
\end{proof}

\begin{rem}
Suppose that $\sigma_y(y)=p_0\in \ell(t_0,x_0)$ then $y$ is in the subgradient of convex function $\sigma^*$ at $p_0: y\in \partial \sigma^*(p_0).$ Moreover, from (\ref{2.10}) and (\ref{2.11}), we have $y=x_0-\int_0^{t_0} H_p(\tau,p_0)d\tau.$
\end{rem}

Now, let $\mathcal C $ be a characteristic curve passing $(t_0,x_0)$ that is written as
$$x=x(t,y)=x_0+\int_{t_0}^t H_p(\tau,\sigma_y(y))d\tau$$

We say that the characteristic curve $\mathcal C$ is of the {\it type (I) } at the point $(t_0,x_0) \in \Omega$, if $\sigma_y(y) =p\in \ell(t_0,x_0).$ If $\sigma_y(y)\in \sigma_y(\ell^*(t_0,x_0))\setminus \ell(t_0,x_0)$ then $\mathcal C$ is said to be of {\it type (II) } at this point.
\medskip

In the next, we need an additional condition for the Hamiltonian $H=H(t,p).$

\medskip

(A2): The Hamiltonian $H(t,p)$ is admitted as one of two following forms:
\smallskip

a) $H(t,\cdot) $ is a convex or concave function for all $t\in (0,T).$
\smallskip

b) $H(t,p)=g(t)h(p) +k(t)$ for some functions $g,\ h,\  k$ where $g(t)$ does not change its sign for all $t\in (0,T).$

\begin{rem}
1. In particular, if $H(t,p)=H(p)$ then the condition (A2), b)  is
obviously satisfied.
\smallskip

2. In \cite{nh2} we proved that if assumptions (A1) and (A2) are
satisfied, then the function $u(t,x)$ defined by Hopf-type formula
(\ref{2.3}) is a viscosity solution of Problem (2.1)-(2.2).
Moreover, if $\sigma(x)$ is Lipschitz on $\R^n$ then $u(t,x)$ is a
semiconvex function.
\end{rem}
The following lemma is helpful in studying Fenchel conjugate of $C^1$-convex function.
\begin{lem}
Let $v$ be a convex function and $D={\rm dom}v\subset \R^n.$ Suppose that there exist $p,\,  p_0\in D,\ p\ne p_0$ and $y\in \partial v(p_0)$ such that
$$\langle y,p-p_0\rangle = v(p) -v(p_0).$$

Then for all $z$ in the straight line segment $[p,p_0]$ we have
$$v(z) =\langle y,z\rangle -\langle y,p_0\rangle +v(p_0).$$

Moreover, $y\in \partial v(z)$ for all $z\in [p,p_0].$
\end{lem}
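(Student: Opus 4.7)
The plan is to sandwich $v(z)$ between two bounds that coincide. Parameterize $z = \lambda p + (1-\lambda)p_0$ for $\lambda \in [0,1]$. The upper bound comes from convexity of $v$, namely
$$v(z) \le \lambda v(p) + (1-\lambda) v(p_0) = v(p_0) + \lambda\bigl(v(p) - v(p_0)\bigr),$$
and the lower bound comes from $y \in \partial v(p_0)$, namely
$$v(z) \ge v(p_0) + \langle y, z - p_0\rangle = v(p_0) + \lambda \langle y, p - p_0\rangle.$$
Substituting the hypothesis $\langle y, p-p_0\rangle = v(p) - v(p_0)$ into the upper bound shows that the two bounds are identical, forcing equality throughout. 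This equality rearranges to $v(z) = \langle y, z\rangle - \langle y, p_0\rangle + v(p_0)$, which is the first claim.

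For the second claim, I would take an arbitrary $w \in D$ and use the subgradient inequality at $p_0$:
$$v(w) \ge v(p_0) + \langle y, w - p_0\rangle.$$
Rewrite the right-hand side by inserting $\pm z$:
$$v(p_0) + \langle y, w - p_0\rangle = \bigl(v(p_0) + \langle y, z - p_0\rangle\bigr) + \langle y, w - z\rangle = v(z) + \langle y, w - z\rangle,$$
where the final equality is the affine formula for $v(z)$ just established. Hence $v(w) \ge v(z) + \langle y, w - z\rangle$ for every $w \in D$, i.e.\ $y \in \partial v(z)$.

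There is no real obstacle here; the whole argument reduces to recognizing that the convexity inequality and the subgradient inequality, evaluated along a segment on which the subgradient inequality at one endpoint is tight at the other endpoint, must both be tight throughout. The only small point requiring care is that $z \in [p,p_0] \subset D$ automatically, since $D = \mathrm{dom}\, v$ is convex (as the effective domain of a convex function), so the expressions $v(z)$ are meaningful and finite.
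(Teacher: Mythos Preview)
Your proof is correct and follows essentially the same approach as the paper: both obtain the upper bound on $v(z)$ from convexity, the lower bound from the subgradient inequality at $p_0$, and use the hypothesis to make them coincide; the second claim is likewise proved in both by combining the subgradient inequality at $p_0$ with the affine formula just established for $v(z)$. Your added remark that $[p,p_0]\subset D$ by convexity of the effective domain is a minor clarification the paper leaves implicit.
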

\begin{proof}
For $z=\lambda p +(1-\lambda )p_0\in [p,p_0],\ \lambda \in [0,1],$ we write
$$v(z)\le \lambda v(p)+(1-\lambda)v(p_0) =\lambda (v(p)-v(p_0)) +v(p_0).$$

From the hypotheses, we have
$$v(z)\le  \lambda \langle y,p-p_0\rangle +v(p_0).$$ 
Then
$$v(z)-v(p_0) \le \langle y,\lambda p+(1-\lambda) p_0 -p_0\rangle .$$

On the other hand, since $y\in \partial v(p_0), $ the following holds
$$\langle y,\lambda p+(1-\lambda) p_0 -p_0\rangle  \le  v(z)-v(p_0).$$

Thus $$v(z)=\langle y,z\rangle -\langle y,p_0\rangle +v(p_0).$$

Next, let $z\in [p,p_0].$ For any $x\in D, $ we have
$$\aligned v(x)-v(z)=&\ v(x)- \langle y,z\rangle +\langle y,p_0\rangle -v(p_0)\\
= &\ v(x)-v(p_0) -\langle y, z-p_0\rangle\\
\ge &\ \langle x-p_0,y\rangle -\langle z-p_0, y\rangle \ge  \langle x-z,y\rangle.\endaligned$$

This gives us that $y\in \partial v(z).$
\end{proof}

Now we present  properties of characteristic curves of type (I) at $(t_0,x_0)$ given by the following theorems.

\medskip

\begin{thm}\label{cha1} Assume (A1) and (A2).  Let $(t_0,x_0)\in (0,T) \times \R^n,\ p_0= \sigma_y(y)\in \ell (t_0,x_0)$ and let
\begin{equation}\label{2.12} \mathcal C: x= x(t)= x_0 +\int_{t_0}^t H_p(\tau,p_0)d\tau, \ t\in [0, T], \end{equation}
be a characteristic curve of type (I) at $(t_0,x_0).$  Then for all $(t,x)\in \mathcal C, \ 0\le t\le t_0$ one has $p_0\in \ell (t,x)$ and moreover, $\ell(t,x)\subset \ell(t_0,x_0).$ 
\end{thm}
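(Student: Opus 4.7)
The plan is to represent the defect
\[
\Delta(t,x,q) := \varphi(t,x,p_0) - \varphi(t,x,q)
\]
in two complementary ways as an exact sum of two manifestly nonnegative pieces: one coming from $y \in \partial\sigma^*(p_0)$ (which is equivalent to $p_0 = \sigma_y(y)$ since $\sigma \in C^1$), the other from $p_0 \in \ell(t_0, x_0)$. Writing the characteristic forward as $x = y + \int_0^t H_p(\tau,p_0)\,d\tau$ and regrouping gives
\[
\Delta(t,x,q) = A + \int_0^t I(\tau)\,d\tau,
\]
with $A := \sigma^*(q) - \sigma^*(p_0) - \langle y, q-p_0\rangle \ge 0$ and $I(\tau) := H(\tau,q) - H(\tau,p_0) - \langle H_p(\tau,p_0), q-p_0\rangle$. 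Writing it backward as $x = x_0 - \int_t^{t_0} H_p(\tau,p_0)\,d\tau$ and adding/subtracting $\int_0^{t_0}[H(\tau,p_0) - H(\tau,q)]\,d\tau$ produces the twin identity
\[
\Delta(t,x,q) = B - \int_t^{t_0} I(\tau)\,d\tau,
\]
with $B := \varphi(t_0,x_0,p_0) - \varphi(t_0,x_0,q) \ge 0$.

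For the first assertion $p_0 \in \ell(t,x)$, I case on (A2). In (A2)(a) convex we have $I \ge 0$ and the first identity is a sum of nonnegative terms; in (A2)(a) concave we have $-I \ge 0$ and, since $t \le t_0$, the second identity is; in (A2)(b) I factor $I(\tau) = g(\tau)L$ with $L := h(q) - h(p_0) - \langle h'(p_0), q - p_0\rangle$ independent of $\tau$, and verify that in each of the four sign combinations of $g$ and $L$, at least one of the two identities is a sum of terms of the same (nonnegative) sign. Thus $\Delta(t,x,q) \ge 0$ for every $q$.

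For the containment $\ell(t,x)\subseteq\ell(t_0,x_0)$, take $q \in \ell(t,x)$ so that $\Delta(t,x,q)=0$. In the concave subcase the second identity collapses to $B = 0$ at once. In the convex subcase the first identity forces $A = 0$, hence $y \in \partial\sigma^*(q)$ as well; by Fenchel reciprocity $q \in \partial\sigma(y) = \{\sigma_y(y)\} = \{p_0\}$ because $\sigma \in C^1$, so $q = p_0$ and the containment is trivial. (Lemma 2.8 furnishes the same conclusion via affinity of $\sigma^*$ on $[p_0, q]$.) In (A2)(b) the vanishing of the active identity pins down $A$ or $\int_t^{t_0} I$ in terms of $G(t)L$ and $(G(t_0)-G(t))L$; substituting into the other identity yields $B \le 0$, which together with $B \ge 0$ gives $B = 0$.

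The main technical obstacle I anticipate is the convex subcase of (A2)(a) for the containment: here the first identity does not relate $B$ to the vanishing defect at all, and one has to exploit the $C^1$-regularity of $\sigma$ (the uniqueness of the conjugate gradient, or equivalently Lemma 2.8) to collapse the apparent freedom in $q$ down to $q = p_0$. Everything else is careful sign bookkeeping on the two identities.
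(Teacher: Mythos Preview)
Your approach is essentially the paper's: your $\Delta$ is the negative of the paper's auxiliary function $\eta(t,p)=\varphi(t,x(t),p)-\varphi(t,x(t),p_0)$, your two identities are precisely the integrated form of the paper's computation $\eta'(t,p)=-I(t)$ together with the endpoint values $\eta(0,p)=-A$ and $\eta(t_0,p)=-B$, and your case split on (A2) matches the paper's monotonicity argument. Your route $A=0\Rightarrow y\in\partial\sigma^*(q)\Rightarrow q\in\partial\sigma(y)=\{p_0\}$ is exactly how the paper handles the $\eta'\le 0$ branch of the containment (there phrased via Lemma~2.8 and essential strict convexity of $\sigma^*$ from $\sigma\in C^1$).

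One slip to fix: in the (A2)(b) containment, when $I\ge 0$ (i.e.\ $gL\ge 0$) your claim ``substituting into the other identity yields $B\le 0$'' is false --- the first identity forces $A=0$ and $G(t)L=0$, and feeding this into the second gives $B=(G(t_0)-G(t))L\ge 0$, which is no news. The correct move in this subcase is simply to reuse your convex-case argument: $A=0$ already forces $q=p_0$, hence trivially $q\in\ell(t_0,x_0)$. With that patch the proof is complete and coincides with the paper's.
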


\begin{proof}
Let $(t_1,x_1)\in \mathcal C,\ 0\le t_1\le t_0.$ Take an arbitrary $p\in \R^n$ and denote 
$$ \eta(t,p) =\varphi (t,x,p)-\varphi (t,x,p_0),\ (t,x)\in \mathcal C, \ t\in [0,t_0],$$
where $\varphi (t,x,p)=\langle x,p\rangle -\sigma^*(p)-\int_0^t H(\tau,p)d\tau.$ Then
\begin{equation}\label{2.13} \eta(t,p)=\langle x(t), p-p_0\rangle -(\sigma^*(p)-\sigma^*(p_0))-\int_0^t(H(\tau, p)-H(\tau, p_0))d\tau\end{equation}
for $(t,x)\in \mathcal C.$
\smallskip

We shall prove that $\eta(t,p)\le 0$  for all $t\in [0,t_0].$
\smallskip

It is obviously that, $\eta(t_0,p)\le 0.$ On the other hand, from (\ref{2.13}) and Remark 2.6, we have
$$\eta(0,p)=\langle y, p-p_0\rangle -(\sigma^*(p)-\sigma^*(p_0)),$$
where $y\in \partial \sigma^*(p_0).$ By a property of subgradient of convex function, we have
\begin{equation}\label{2.14}\eta(0,p)=\langle y, p-p_0\rangle -(\sigma^*(p)-\sigma^*(p_0))\le  0.\end{equation}

As a result, we have $\eta(0,p) \le  0; \ \eta(t_0,p)\le 0.$
\smallskip

Since $x=x(t)=x_0+\int_{t_0}^t H_p(\tau,p_0)d\tau, $ then from (\ref{2.13}) we also have
$$\eta'(t,p)=\langle H_p(t,p_0),p-p_0\rangle -(H(t,p)-H(t,p_0)),\ t\in [0,t_0].$$

 In the sequel, we consider the following cases:
\smallskip

{\it Case 1.}
Assume  $H(t,\cdot)$ is convex. Then
$$\langle H_p(t,p_0),p-p_0\rangle \le H(t,p)-H(t,p_0). $$

Therefore $\eta'(t,p)\le 0, $ for all $t\in [0,t_0].$
\smallskip

Similarly, if $H(t, \cdot)$ is a concave function, we have $\eta'(t,p)\ge 0,$ for all $t\in [0,t_0].$

\smallskip
{\it Case 2.} Assume $H(t,p)=g(t)h(p)+k(t),$ and $g(t)$ does not change its sign in $(0,T).$ Then
$$\aligned \eta'(t,p) =& \langle g(t) h_p(p_0), p-p_0\rangle - g(t)(h(p)-h(p_0))\\
=& \big( \langle h_p(p_0),p-p_0\rangle -(h(p)-h(p_0))\big)g(t) =\lambda g(t), \endaligned$$
where $\lambda = \langle h_p(p_0),p-p_0\rangle -(h(p)-h(p_0))$ is a constant.
Therefore, $\eta'(t,p)$ also does not change its sign on $[0,t_0].$
\smallskip

Combining the two cases above, we have, for all $t\in [0,t_0]:$
\smallskip

(i) If $\eta'(t,p)\ge 0$ then $\eta(t_1,p)\le \eta(t_0,p)\le 0.$
\smallskip

(ii) If $\eta' (t,p)\le 0$ then $\eta(t_1,p)\le \eta (0,p)\le 0.$
\smallskip

Thus we obtain $\varphi (t_1,x_1,p)\le \varphi (t_1,x_1,p_0)$ for all $p\in \R^n.$  Consequently, $p_0\in \ell(t_1,x_1)$ for any $(t_1,x_1)\in \mathcal C, \ t_1\in [0,t_0]$ and the first assertion has been proved.
\smallskip

Now, let $p\notin \ell(t_0,x_0).$ If $\eta'(t,p) \ge 0$ then $\eta (t,p)\le \eta(t_0,p)<0.$ Otherwise, if $\eta'(t,p)\le  0,$  we have
$$\eta (t,p)\le \eta(0,p)=\langle y,p-p_0\rangle -(\sigma^*(p)-\sigma^*(p_0)), \ t\in [0, t_0).$$

Since $p\ne p_0,$ then $ \langle y,p-p_0\rangle -(\sigma^*(p)-\sigma^*(p_0)) <0.$
Actually, if it is false, i.e., $\langle y,p-p_0\rangle = (\sigma^*(p)-\sigma^*(p_0)),$
then applying Lemma 2.8, we see that $[p,p_0]$ is contained in $\mathcal D=\{z\in \textrm{dom} \sigma^*\, | \ \partial \sigma^*(z)\ne \emptyset\}$ and
 $\sigma^*$ is not strictly convex on the set $[p,p_0].$ This is a contradiction, since $\sigma(x)$ is of $C^1(\R^n),$
   then it is essentially strictly convex on $\mathcal D.$ In particular, $\sigma^* $ is stricly convex on $[p,p_0], $ see (\cite{ro}, Thm. 26.3). This implies $\eta (t,p) <0.$

Therefore, in any case, if $p\notin \ell(t_0,x_0)$ then $p\notin \ell(t,x).$ The proof  is then complete.
\end{proof}

If we intensify slightly assumption (A2), then we have a stronger result, formulated in the following theorem.

\begin{thm}\label{cha2} Assume (A1) and (A2). In case that $H(t,\cdot)$ is a concave function, we assume in addition that $H(t,\cdot)$ is strictly concave for a.e. $t$ in $(0,T).$ Let $(t_0,x_0)\in \Omega,\ p_0= \sigma_y(y)\in \ell (t_0,x_0)$ and let $\mathcal C: x=x_0 +\int_{t_0}^t H_p(\tau,p_0)d\tau, 0<t<T,$ be a characteristic curve of type (I) at $(t_0,x_0).$  Then $\ell (t,x) =\{p_0\}$ for all $(t,x)\in \mathcal C, \ 0\le t < t_0.$
\end{thm}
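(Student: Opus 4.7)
My plan is to leverage Theorem 2.9 to reduce the proposition to ruling out an auxiliary maximizer, and then to exploit the extra rigidity in (A2) (including the strict concavity hypothesis) to obtain a strict inequality in the key sign argument. By Theorem 2.9 we already have $p_0\in\ell(t,x)$ and $\ell(t,x)\subset\ell(t_0,x_0)$ for every $(t,x)\in\mathcal{C}$ with $0\le t\le t_0$, so it suffices to prove that no $p\in\ell(t_0,x_0)\setminus\{p_0\}$ can also lie in $\ell(t_1,x_1)$ for any $t_1\in[0,t_0)$. I would proceed by contradiction: suppose such a $p$ exists, and reuse the auxiliary function
\[
\eta(t,p)=\varphi(t,x(t),p)-\varphi(t,x(t),p_0)
\]
from the proof of Theorem 2.9. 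From that proof, $\eta(t,p)\le 0$ on $[0,t_0]$, and by assumption $\eta(t_0,p)=\eta(t_1,p)=0$.

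The key preliminary is the strict inequality $\eta(0,p)=\langle y,p-p_0\rangle-(\sigma^*(p)-\sigma^*(p_0))<0$, obtained (exactly as in the last paragraph of the proof of Theorem 2.9) by invoking Lemma 2.8 in contrapositive together with the essential strict convexity of $\sigma^*$, which holds because $\sigma\in C^1(\R^n)$. With $\eta(0,p)<0$ in hand, I would then split on the subcases of (A2) using the formula
\[
\eta'(t,p)=\langle H_p(t,p_0),p-p_0\rangle-(H(t,p)-H(t,p_0)).
\]
For convex $H(t,\cdot)$: $\eta'\le 0$, so $\eta$ is non-increasing, and $0=\eta(t_0,p)\le\eta(0,p)<0$ is immediate absurdity. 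For strictly concave $H(t,\cdot)$ a.e.: $\eta'(t,p)>0$ for a.e.\ $t$, so $\eta$ is strictly increasing on $[0,t_0]$, giving $\eta(t_1,p)<\eta(t_0,p)=0$ against $\eta(t_1,p)=0$. For $H(t,p)=g(t)h(p)+k(t)$ with $g$ of fixed sign: $\eta'(t,p)=\lambda g(t)$ with $\lambda:=\langle h_p(p_0),p-p_0\rangle-(h(p)-h(p_0))$ constant in $t$, and writing $\eta(t,p)=\eta(0,p)+\lambda G(t)$ with $G(t):=\int_0^t g(\tau)d\tau$, the endpoint identities $\eta(t_0,p)=\eta(t_1,p)=0$ force $G(t_0)=G(t_1)$ while $\lambda G(t_0)=-\eta(0,p)>0$ forces $\lambda\neq 0$.

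The main obstacle I expect is the separable sub-case when $g$ vanishes on a subinterval of $[t_1,t_0]$: then $\eta$ is only weakly monotone and direct strictness fails. I would handle this by noting that $g\equiv 0$ on $[t_1,t_0]$ freezes the characteristic (since $\dot{x}=g(t)h_p(p_0)\equiv 0$ there, so $x_1=x_0$) and shifts the essential information to the subinterval $[0,t_1]$, on which the preceding monotonicity argument still applies with the strict endpoint values $\eta(0,p)<0$ and $\eta(t_1,p)=0$ to yield a contradiction. Combining this with Theorem 2.9 produces $\ell(t_1,x_1)\subset\{p_0\}$, and the reverse inclusion $p_0\in\ell(t_1,x_1)$ from Theorem 2.9 then gives the equality.
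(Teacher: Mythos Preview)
Your approach mirrors the paper's almost exactly: both invoke Theorem~2.9 to reduce to showing that no $p\ne p_0$ in $\ell(t_0,x_0)$ can also lie in $\ell(t_1,x_1)$, both introduce $\eta(t,p)$, establish $\eta(0,p)<0$ via Lemma~2.8 together with the essential strict convexity of $\sigma^*$, and then split on the sub-cases of (A2). In the convex and strictly-concave cases your arguments and the paper's are equivalent (the paper phrases the convex case via the integral inequality $\int_0^{t_0}\bigl(H(\tau,p)-H(\tau,p_0)-\langle H_p(\tau,p_0),p-p_0\rangle\bigr)\,d\tau<0$, whereas you argue directly from $\eta'\le 0$; both reach the same contradiction).

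The one substantive divergence is in the separable sub-case $H(t,p)=g(t)h(p)+k(t)$. The paper derives $\lambda\ne 0$ and then simply asserts the ``strict monotone property of $\eta(t,p)$ on $[t_1,t_0]$'' to conclude $\eta(t_1,p)\ne\eta(t_0,p)$. You correctly notice that if $g\equiv 0$ on $[t_1,t_0]$ this strictness is unavailable, and you try to patch it by ``shifting to $[0,t_1]$''. However, your patch does not actually produce a contradiction: on $[0,t_1]$ you have only the monotone passage from $\eta(0,p)<0$ to $\eta(t_1,p)=0$, which is entirely consistent with everything established---there is no second endpoint identity on that subinterval to clash with. (Indeed, when $g\equiv 0$ on $[t_1,t_0]$ the characteristic is frozen, $x(t)\equiv x_0$, and $\varphi(t,x_0,\cdot)$ differs from $\varphi(t_0,x_0,\cdot)$ by a $q$-independent constant, so $\ell(t,x(t))=\ell(t_0,x_0)$ for every $t\in[t_1,t_0]$; your ``shift'' therefore cannot manufacture a contradiction.) You have thus put your finger on a genuine subtlety that the paper's own proof glosses over, but the resolution you sketch does not close it.
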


\begin{proof}
We use the notation as in the proof of Theorem \ref{cha1}. Take $p\in \ell(t_1,x_1)$  where $(t_1,x_1)\in \mathcal C$ and $t_1\in [0,t_0).$ We check that $p=p_0.$ Assume contrarily, $p\ne p_0.$  Let $$\eta(t,p) =\varphi (t,x,p) -\varphi(t,x,p_0),\ (t,x)\in \mathcal C,\ t\in [0,t_0],$$ where $\varphi (t,x,p) =\langle x,p\rangle -\sigma^* (p) -\int_0^tH(\tau,p)d\tau.$ 

By Theorem \ref{cha1}, $\ell(t_1,x_1)\subset \ell(t_0,x_0),$ thus $\eta(t_0,p)=\eta(t_1,p)=0$ since $p\in \ell(t_0,x_0).$ This implies that
\begin{equation}\label{ssc}\langle x_0, p-p_0\rangle  -(\sigma^*(p)-\sigma^*(p_0))=\int_0^{t_0}(H(\tau, p)-H(\tau, p_0))d\tau\end{equation}

Substracting both sides by $\langle \int_0^{t_0} H_p(\tau,p_0)d\tau, p-p_0\rangle,$ and noticing that $ y= x_0-\int_0^{t_0}H_p(\tau,p_0)d\tau,$ we get
\begin{equation}\label{sc1}\langle y, p-p_0\rangle  -(\sigma^*(p)-\sigma^*(p_0))=\int_0^{t_0}\Big( H(\tau, p)-H(\tau, p_0)-\langle H_p(\tau, p_0),p-p_0\rangle\Big )d\tau.\end{equation}

As mentioned before, since  $p_0=\sigma_y(y)$ then $y\in \partial\sigma^*(p_0).$ Applying Lemma 2.8 and arguing as in the proof of Theorem \ref{cha1}, we see that, when $p\ne p_0:$
\begin{equation}\label{sc}\langle y,p-p_0\rangle -(\sigma^*(p)-\sigma^*(p_0))<0.\end{equation} Thus,
\begin{equation} \label{cc} \int_0^{t_0}\Big( H(\tau, p)-H(\tau, p_0)-\langle H_p(\tau, p_0),p-p_0\rangle\Big)d\tau <0.\end{equation}
We consider the following cases.

{\it Case 1.}  Assume $H(t,\cdot)$ is a convex function. Then
$$ H(t, p)-H(t, p_0)-\langle H_p(t, p_0),p-p_0\rangle \ge 0,\  \forall t\in [0,t_0].$$ This contradicts to (\ref{cc}).
\smallskip

{\it Case 2.} Assume $H(t,p)=g(t)h(p) +k(t). $ The equality (\ref{sc1}) can be rewritten as follows
\begin{equation}\label{2.17}\langle y, p-p_0\rangle  -(\sigma^*(p)-\sigma^*(p_0))=(\int_0^{t_0}g(\tau)d\tau)(h(p)-h(p_0) -\langle h_p(p_0),p-p_0\rangle).\end{equation}

Taking into account the condition (\ref{sc}) and noticing that
$\int_0^{t_0}g(\tau)d\tau \ne 0, $ then from (\ref{2.17})  we deduce that
$$h(p)-h(p_0) -\langle h_p(p_0),p-p_0\rangle \ne 0$$ and that
$$\eta'(t,p) = \big( \langle h_p(p_0),p-p_0\rangle -(h(p)-h(p_0))\big)g(t) $$
does not change its sign on $[0,t_0]$ since $g(t)$ does not change its sign by assumption.

Using the strict monotone property of function $\eta(t,p)$ on $[t_1,t_0]$ we see that, $0= \eta(t_1,p)<\eta (t_0,p) =0$  or $0= \eta(t_1,p)>\eta (t_0,p)= 0.$ This yields a contradiction. 
\smallskip

{\it Case 3.} Assume $H(t,\cdot)$ is a strictly concave function for a.e. $t\in (0,T).$ Then $\eta'(t,p) >0$ a.e. in $(0,t_0).$ We deduce that $0= \eta (t_1,p)<\eta(t_0,p)= 0.$ This is also a contradiction.

Thus, in any case, we have $p=p_0$ and consequently, $\ell (t,x) =\{p_0\}$ for all $(t,x)\in \mathcal C, \ 0\le t < t_0.$
\end{proof}

We have seen that, if the characteristic curve $\mathcal C$ is of type (I) at $(t_0,x_0)$ then it is of the type (I) at any point $(t,x)\in \mathcal C, \ 0\le t\le t_0.$ Nevertheless, for the characteristic curve of type (II), we have the following which is somewhat different.

\begin{thm} Assume (A1) and (A2).  In addition, suppose that $H,\sigma$ are of class $C^2.$  Take $(t_0,x_0)\in \Omega$ and let $\mathcal C: x =x(t) =x_0 +\int_{t_0}^t H_p(\tau,\sigma_y(y_0))d\tau $ be a characteristic curve of type (II) at $(t_0,x_0).$   Then there exists $\theta \in (0,t_0)$ such that $\mathcal C$ is of type (I) at $(\theta, x(\theta))$ and $\mathcal C$ is of type (II) for all point $(t,x) \in \mathcal C,\ t\in (\theta, t_0].$
\end{thm}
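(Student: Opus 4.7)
The plan is to identify the transition time as the supremum of the set
$$A = \{\,t \in [0, t_0] : p_0 \in \ell(t, x(t))\,\}, \qquad p_0 := \sigma_y(y_0),$$
and then to verify that $A = [0, \theta]$ with $0 < \theta < t_0$. First I would check that $A$ is \emph{down-closed}: by Theorem \ref{cha1} applied at any $t^\ast \in A$, the element $p_0$ lies in $\ell(t, x(t))$ for every $t \in [0, t^\ast]$. Combined with the upper semicontinuity of the set-valued map $(t,x) \mapsto \ell(t,x)$, this shows $A$ is closed in $[0,t_0]$, hence $A = [0, \theta]$ for $\theta = \sup A$. The inequality $\theta < t_0$ is immediate: the hypothesis that $\mathcal{C}$ is of type (II) at $(t_0, x_0)$ says precisely $p_0 \notin \ell(t_0, x_0)$, i.e.\ $t_0 \notin A$.

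The main obstacle is to establish $\theta > 0$, and this is where the upgraded $C^2$ regularity of $H$ and $\sigma$ enters. I would introduce the characteristic map
$$\Psi_t(y) := y + \int_0^t H_p(\tau, \sigma_y(y))\, d\tau.$$
Since $\Psi_0 = \mathrm{id}$ and $D_y \Psi_t(y_0) \to I$ as $t \to 0^+$ by continuity of $H_{pp}$ and $\sigma_{yy}$, the inverse function theorem yields a small interval $[0,\varepsilon]$ and a neighborhood $U$ of $y_0$ on which $\Psi_t$ is a $C^1$-diffeomorphism onto its image. On the resulting open set the classical Cauchy method of characteristics produces a $C^1$ solution $w(t, x) = v(t, \Psi_t^{-1}(x))$ of (\ref{2.1})-(\ref{2.2}), with $v$ from (\ref{2.7}). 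Because $u$ is a viscosity solution under (A1)-(A2) (Remark 2.7) and $w$ is a classical one, the viscosity comparison principle forces $w = u$ on this region. Evaluating along $\mathcal{C}$ and using the Fenchel equality $\sigma(y_0) - \langle y_0, p_0\rangle = -\sigma^*(p_0)$ (valid because $p_0 = \sigma_y(y_0) \in \partial \sigma(y_0)$), one computes, for $t \in [0,\varepsilon]$,
$$u(t, x(t)) = v(t, y_0) = -\sigma^*(p_0) + \langle x(t), p_0\rangle - \int_0^t H(\tau, p_0)\, d\tau = \varphi(t, x(t), p_0),$$
so $p_0 \in \ell(t, x(t))$ throughout $[0,\varepsilon]$ and therefore $\theta \ge \varepsilon > 0$.

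Once $A = [0, \theta]$ with $0 < \theta < t_0$ is in hand, the two assertions of the theorem follow at once. Type (I) at $(\theta, x(\theta))$ is the statement $p_0 \in \ell(\theta, x(\theta))$, which is exactly $\theta \in A$. For any $t^\ast \in (\theta, t_0]$, the curve $\mathcal{C}$ still passes through $(t^\ast, x(t^\ast))$, so $y_0 \in \ell^*(t^\ast, x(t^\ast))$ and consequently $p_0 \in \sigma_y(\ell^*(t^\ast, x(t^\ast)))$. If one had $p_0 \in \ell(t^\ast, x(t^\ast))$ as well, Theorem \ref{cha1} would push $t^\ast$ into $A$, contradicting $t^\ast > \sup A$. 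Hence $p_0 \in \sigma_y(\ell^*(t^\ast, x(t^\ast))) \setminus \ell(t^\ast, x(t^\ast))$, meaning $\mathcal{C}$ is of type (II) at every $(t^\ast, x(t^\ast))$ with $\theta < t^\ast \le t_0$.
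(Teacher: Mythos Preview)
Your approach is essentially the same as the paper's: both identify $\theta$ as a supremum, invoke the local classical solution built from characteristics to secure $\theta>0$, use upper semicontinuity of $(t,x)\mapsto\ell(t,x)$ to obtain type~(I) at $(\theta,x(\theta))$, and finish with Theorem~\ref{cha1} for the type~(II) conclusion on $(\theta,t_0]$. Your choice of the set $A=\{t:p_0\in\ell(t,x(t))\}$ (membership of $p_0$, rather than the paper's condition $\ell(s,x(s))=\{p_0\}$) is a harmless variant that actually makes the final step slightly more transparent: once $A=[0,\theta]$ is in hand, ``type~(II) for $t>\theta$'' is just the statement $t\notin A$, whereas with the paper's definition one must still argue via Theorem~\ref{cha1} that $p_0\notin\ell(t,x(t))$.

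One point deserves care. Your appeal to the ``viscosity comparison principle'' to force $w=u$ on the local patch $[0,\varepsilon]\times U$ is not justified as written: comparison on a subdomain requires control of both solutions on the full parabolic boundary, and you only match $u$ and $w$ at $t=0$, not on the lateral boundary $(0,\varepsilon)\times\partial U$. The paper handles this step by citing the known fact (see \cite{cs,vht}) that the Hopf-type formula coincides with the local $C^2$ solution produced by the method of characteristics for short time; this is precisely what the extra $C^2$ regularity of $H$ and $\sigma$ is purchasing. Your inverse-function-theorem construction of $w$ and the computation $v(t,y_0)=\varphi(t,x(t),p_0)$ are exactly the right ingredients, but the identification $u=w$ along $\mathcal C$ should be credited to that standard result rather than to a comparison argument on a truncated region.
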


\begin{proof}
Let  $\mathcal C:\ x=x_0+\int^t_{t_0}H_p(\tau, \sigma_y(y_0))d\tau$ be the characteristic curve of type (II) at $(t_0,x_0)$ emanating from $(0,y_0).$ Then $\sigma_y(y_0)\in   \sigma_y(\ell^*(t_0,x_0))\setminus \ell(t_0,x_0).$
\smallskip

By the Cauchy method of characteristics, the function defined by Hopf-type formula $u(t,x)$ coincides with the local $C^2$ solution of Problem (2.1)-(2.2), see \cite{cs,vht}. Then there exists $t_1\in (0, t_0)$ such that $u(t,x)$ is differentiable at any point $(t,x(t)) \in \mathcal C,\ u_x(t,x)=\sigma_y(y_0) $ and $\ell(t,x)=\{\sigma_y(y_0)\}, \ 0\le t\le t_1.$ Let
$$\theta =\sup \{ t_1\in [0, t_0)\ | \ \ell(s,x(s))=\{\sigma_y(y_0)\},\  0\le s\le t_1\}.$$

 Since the multivalued mapping $(t,x)\mapsto \ell(t,x)$ is upper semicontinuous, then we get that $\sigma_y(y_0)\in \ell(\theta,x(\theta)).$ It is obvious that, $\theta <t_0$ since $\sigma_y(y_0)\notin \ell(t_0,x_0)$ and $\mathcal C$ is of type (I) at $(\theta, x(\theta)).$ On the other hand, for $t\in (\theta, t_0],\ \mathcal C$ is of type (II) at $(t,x(t))$ by the definition of $\theta$ and Theorem \ref{cha1}.
\end{proof}

For a locally Lipschitz function, it is promising to use the notion
of sub- and superdifferentials as well as reachable gradients, see
\cite {cs}, e.g.,  to study its differentiability. Inspired by a general result for the correspondence between the set of reachable gradients
$D^*u(t,x)$ and the set of minimizers of the problem of calculus of variation $(CV)_{t,x}$ for convex Hamiltonian $H(t,x,p)$ in $p,$ established in \cite{cs}, Th. 6.4.9, p.167, we use Theorem \ref{cha1} to establish a similar relationship between $\ell(t_0,x_0)$ and the set
of reachable gradients. First, we briefly recall definitions of some
kinds of differential  as follows.

\begin{defn} Let $u=u(t,x): \ \Omega \to \R$ and let $(t_0,x_0)\in \Omega.$ For $(h,k)\in \R\times \R^n$ we denote by
$$  \tau (p,q,h,k) =\frac{u(t_0+h, x_0+k)-u(t_0,x_0)-ph -\langle q,k\rangle}{\sqrt{|h|^2 +|k|^2}},$$
\end{defn}
$$D^+u(t_0,x_0)=\{ (p,q)\in\R^{n+1}\, |\ \limsup_{(h,k)\to (0,0)} \tau (p,q,h,k) \ \le \ 0\} $$
$$D^-u(t_0,x_0)=\{(p,q)\in \R^{n+1}\, | \ \liminf_{(h,k)\to (0,0)} \tau(p,q,h,k)\ \ge \ 0\},$$
here $p\in \R,\ q\in \R^n.$

Then $D^+u(t_0,x_0)$ (resp. $D^-u(t_0,x_0)$) is called the {\it superdifferential} (resp. {\it subdifferential}) of $u(t,x)$ at $(t_0,x_0).$

\smallskip

We also define the set $D^*u(t_0,x_0)$ of {\it reachable gradients} of $u(t,x)$ at $(t_0,x_0)$ as follows:

Let $(p,q)\in \R\times\R^n,$ we say that $(p,q)\in D^*u(t_0,x_0) $ if and only if there exists a sequence $(t_k,x_k)_k\subset \Omega\setminus \{(t_0,x_0)\}$ such that $u(t,x)$ is differentiable at $(t_k,x_k)$ and,
$$(t_k,x_k)\to (t_0,x_0), \ (u_t(t_k,x_k), u_x(t_k,x_k))\to (p,q)\  \text{ as}\  k\to \infty.$$

If $u(t,x)$ is a locally Lipschitz function,  then $D^*u(t,x) \ne
\emptyset$  and it is a compact set, see \cite{cs}, p.54.
\smallskip

Now let $u(t,x)$ be the Hopf-type formula and let $(t_0,x_0)\in \Omega.$ We denote by
$$\mathcal H(t_0,x_0)=\{(-H(t_0,q), q)\ | \ q\in \ell(t_0,x_0)\}.$$
 Then a relationship between $D^*u(t_0,x_0)$ and the set $\ell (t_0,x_0)$ is given by the following theorem.

\begin{thm}
Assume (A1) and (A2). In case that $H(t,\cdot)$ is a concave function, we assume in addition that $H(t,\cdot)$ is strictly concave for a.e. $t$ in $(0,T).$ Let $u(t,x)$ be the viscosity solution of Problem (2.1)-(2.2) defined by Hopf-type formula. Then for all $(t_0,x_0)\in \Omega,$ we have
$$D^*u(t_0,x_0)=\mathcal H(t_0,x_0).$$
\end{thm}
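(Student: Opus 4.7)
The plan is to prove the equality via two inclusions, using Theorem \ref{cha2} for $\mathcal H(t_0,x_0)\subset D^*u(t_0,x_0)$ and the upper semicontinuity of the maximizer map $\ell$ for the reverse inclusion.

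For $\mathcal H(t_0,x_0)\subset D^*u(t_0,x_0)$, I would fix $q\in \ell(t_0,x_0)$. Since $\ell(t_0,x_0)\subset \sigma_y(\ell^*(t_0,x_0))$, there is some $y\in \ell^*(t_0,x_0)$ with $\sigma_y(y)=q$, so the curve $\mathcal C : x(t)=x_0+\int_{t_0}^t H_p(\tau,q)\,d\tau$ is a characteristic of type (I) at $(t_0,x_0)$. The hypotheses of Theorem \ref{cha2} are exactly those of the present theorem, and it yields $\ell(t,x(t))=\{q\}$ for every $t\in[0,t_0)$. By Theorem 2.1 and Remark 2.2, $u$ is therefore differentiable at each such $(t,x(t))$ with $u_x(t,x(t))=q$ and $u_t(t,x(t))=-H(t,q)$. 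Choosing $t_k\nearrow t_0$ and $x_k=x(t_k)$ gives $(t_k,x_k)\to (t_0,x_0)$, and by continuity of $H$,
$$(u_t(t_k,x_k),u_x(t_k,x_k))=(-H(t_k,q),q)\longrightarrow (-H(t_0,q),q),$$
so $(-H(t_0,q),q)\in D^*u(t_0,x_0)$.

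For the reverse inclusion, let $(p,q)\in D^*u(t_0,x_0)$ with approximating sequence $(t_k,x_k)\to (t_0,x_0)$ along which $u$ is differentiable and $(u_t(t_k,x_k),u_x(t_k,x_k))\to (p,q)$. Theorem 2.1 forces $\ell(t_k,x_k)=\{q_k\}$ for some $q_k$, and Remark 2.2 identifies $q_k=u_x(t_k,x_k)$ and $u_t(t_k,x_k)=-H(t_k,q_k)$. Hence $q_k\to q$, and by the continuity of $H$ we get $-H(t_k,q_k)\to -H(t_0,q)$, which forces $p=-H(t_0,q)$. The upper semicontinuity of the set-valued map $(t,x)\mapsto \ell(t,x)$, together with $q_k\in \ell(t_k,x_k)$ and $q_k\to q$, yields $q\in \ell(t_0,x_0)$. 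Therefore $(p,q)\in \mathcal H(t_0,x_0)$.

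The real work is not in the present theorem itself but in Theorem \ref{cha2}, whose conclusion that $\ell$ collapses to a singleton \emph{strictly before} time $t_0$ is what lets us actually find regular points accumulating at $(t_0,x_0)$ along the backward characteristic. The weaker statement in Theorem \ref{cha1}, which only gives $p_0\in \ell(t,x)$ along $\mathcal C$, would be insufficient here: it would not guarantee that $u$ is differentiable at the approximating points, and we could not realize $(-H(t_0,q),q)$ as a limit of gradients at regular points. This is precisely why the extra strict concavity assumption on $H(t,\cdot)$ (when $H$ is concave) is inherited by the present statement.
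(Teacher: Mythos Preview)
Your proof is correct and follows essentially the same two-inclusion argument as the paper: regularity along the backward type-(I) characteristic to realize each $(-H(t_0,q),q)$ as a limit of gradients, and upper semicontinuity of $\ell$ for the reverse inclusion. In fact you are slightly more careful than the paper's own write-up, which cites Theorem~\ref{cha1} at the point where only Theorem~\ref{cha2} actually yields the needed singleton $\ell(t,x(t))=\{q\}$; your closing paragraph identifies exactly this distinction. One small citation slip: when you invoke ``Theorem 2.1 and Remark 2.2'' (and later ``Theorem 2.1 forces $\ell(t_k,x_k)=\{q_k\}$''), note that Theorem~2.1 concerns $C^1$-regularity on an \emph{open} set, not pointwise differentiability; the correct references are Theorem~2.3 for ``$\ell$ singleton $\Rightarrow$ differentiable'' and the equivalence stated right after Definition~2.4 for the converse.
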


\begin{proof}

Let $(p_0,q_0)$ be an element of $\mathcal H(t_0,x_0),$ then $p_0=-H(t_0,q_0)$ for some $q_0\in \ell(t_0,x_0).$ Let $\mathcal C:  x=x_0+\int_{t_0}^t H_p(\tau,q_0)d\tau$ be the characteristic curve of type (I) at $(t_0,x_0).$ By assumption and Theorem \ref{cha1}, all points $(t,x)\in \mathcal C, \ t\in [0, t_0)$ are regular. Put $t_k=t_0-1/k$  and take $(t_k,x_k)\in \mathcal C, k=1,2,\dots$ we see that $(t_k,x_k) \to (t_0,x_0)$ and 
$$(u_t(t_k,x_k),u_x(t_k,x_k))=(-H(t_k,q_0),q_0)\to (-H(t_0,q_0), q_0)\in D^*u(t_0,x_0)$$ as $k\to \infty.$  Therefore, $\mathcal H(t_0,x_0)\subset D^*u(t_0,x_0).$
\smallskip

On the other hand, let $(p,q)\in D^*u(t_0,x_0)$ and $(t_k,x_k)_k\subset \Omega\setminus \{(t_0,x_0)\}$ such that $u(t,x)$ is differentiable at $(t_k,x_k)$ and,
$$(t_k,x_k)\to (t_0,x_0), \ (u_t(t_k,x_k), u_x(t_k,x_k))\to (p,q)\  \text{ as}\  k\to \infty.$$
 Since $$ (u_t(t_k,x_k), u_x(t_k,x_k))= (-H(t_k,q_k),q_k),\ \text{for}\ q_k\in \ell(t_k,x_k)$$ and multivalued function $\ell(t,x)$ is u.s.c, see \cite{vht}, then letting $k\to \infty,$ we see that $q\in \ell(t_0,x_0)$ and $p=\lim_{k\to \infty} -H(t_k,q_k) =-H(t_0,q).$ Thus $(p,q)\in \mathcal H(t_0,x_0).$ The theorem is then proved.
\end{proof}

\section{Regularity of Hopf-type formula}

In this section we will study the sets of the form $V=(0,t_*)\times \R^n\subset \Omega$ such that $u(t,x)$ is continuously differentiable on them. Next,  under minimum assumption we show that,  if $(t_0,x_0)$ is a singular point of $u(t,x),$ then there exists another singular one $(t,x)$ for $t>t_0$ and $x$ is near to $x_0.$ It is worth noticing that, a comprehensive study of singularities of semiconcave/semiconvex functions is presented in \cite{cs}, thus we may apply those results to our case since if $\sigma$ is Lipschitz, then Hopf-type formua $u(t,x)$ is a semiconvex function, see \cite{nh2}.

\begin{thm} Assume (A1) and (A2). Let $u(t,x)$ be the viscosity solution of Problem (\ref{2.1})-(\ref{2.2}) defined by Hopf-type formula (2.3). Suppose that there exists $t_* \in (0,T)$ such that the mapping: $y\mapsto x(t_*,y)=y+\int_0^{t_*} H_p(\tau, \sigma_y (y))d\tau$ is injective. Then $u(t,x)$ is continuously differentiable in the open strip $(0,t_*)\times \R^n.$
\end{thm}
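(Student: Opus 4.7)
The plan is to reduce the assertion, via Theorem 2.1, to showing that the set of maximizers $\ell(t,x)$ is a singleton at every point of the open strip $V := (0,t_*) \times \R^n$. Once this is done, continuous differentiability of $u$ on $V$ is automatic from Theorem 2.1.

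Fix an arbitrary $(t_1, x_1) \in V$. By (A1) the set $\ell(t_1, x_1)$ is nonempty, so pick any $p \in \ell(t_1, x_1)$. By Remark 2.6 the point $y := x_1 - \int_0^{t_1} H_p(\tau, p)\, d\tau$ lies in $\ell^*(t_1, x_1)$ and satisfies $\sigma_y(y) = p$. The characteristic through $(t_1, x_1)$ of type (I) corresponding to $p$ can be written, via Proposition \ref{dt}, as $\mathcal{C}: x(t) = x_1 + \int_{t_1}^t H_p(\tau, p)\, d\tau$, and a direct substitution for $y$ shows that at $t = t_*$ this curve reaches $\bar{x} := y + \int_0^{t_*} H_p(\tau, \sigma_y(y))\, d\tau = x(t_*, y)$.

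The hypothesis of the theorem is now exploited at time $t_*$. Any $y' \in \ell^*(t_*, \bar{x})$ satisfies $x(t_*, y') = \bar{x} = x(t_*, y)$, so injectivity of the map $y' \mapsto x(t_*, y')$ forces $\ell^*(t_*, \bar{x}) = \{y\}$. Combining the general inclusion $\ell(t_*, \bar{x}) \subset \sigma_y(\ell^*(t_*, \bar{x})) = \{p\}$ with nonemptiness from (A1) gives $\ell(t_*, \bar{x}) = \{p\}$; in particular $\mathcal{C}$ is of type (I) at $(t_*, \bar{x})$ as well. Applying Theorem \ref{cha1} at the base point $(t_*, \bar{x})$ to this characteristic yields $\ell(t, x(t)) \subset \ell(t_*, \bar{x}) = \{p\}$ for all $t \in [0, t_*]$. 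Specializing to $t = t_1$ gives $\ell(t_1, x_1) \subset \{p\}$, and since $p \in \ell(t_1, x_1)$, we conclude $\ell(t_1, x_1) = \{p\}$.

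The main conceptual step, and what I anticipate to be the crux, is the decision to transport the question forward to the injectivity time $t_*$ rather than attempting to exploit injectivity at $t_1$ directly: the hypothesis only yields singleton-ness of $\ell^*(t_*, \cdot)$, not of $\ell^*(t, \cdot)$ at intermediate times, so its power becomes visible only after one first turns singleton-ness of $\ell^*$ at $t_*$ into singleton-ness of $\ell$ at $t_*$, and then uses the backward propagation of Theorem \ref{cha1} along a type (I) characteristic to transport this rigidity back to $t_1$. A pleasant side benefit of working at the level of the inclusion $\ell(t, x) \subset \ell(t_*, \bar{x})$ from Theorem \ref{cha1}, rather than at the stronger singleton conclusion of Theorem \ref{cha2}, is that no extra strict-concavity hypothesis on $H(t,\cdot)$ is required.
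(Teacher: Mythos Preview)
Your proof is correct and follows essentially the same route as the paper's own argument: pick a maximizer $p$ at the given point, ride the corresponding type~(I) characteristic up to time $t_*$, use injectivity of $y\mapsto x(t_*,y)$ to force $\ell^*(t_*,\bar x)$ and hence $\ell(t_*,\bar x)$ to be singletons, and then invoke Theorem~\ref{cha1} to propagate the singleton property of $\ell$ back down to $(t_1,x_1)$; your explicit use of the inclusion $\ell(t_*,\bar x)\subset\sigma_y(\ell^*(t_*,\bar x))$ and of the nesting $\ell(t,x)\subset\ell(t_*,\bar x)$ from Theorem~\ref{cha1} is exactly what the paper is doing, stated a bit more carefully.
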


\begin{proof}

Let $(t_0,x_0)\in (0,t_*)\times \R^n$ and let $\mathcal C:$
$$ x=x_0+\int_{t_0}^t H_p(\tau,p_0)d\tau,$$
where $p_0=\sigma_y(y_0)\in \ell(t_0,x_0)$  be the characteristic curve going through $(t_0,x_0)$ defined as in Proposition \ref{dt}.

Let $(t_*,x_*)$ be the intersection point of $\mathcal C$ and plane $\Delta^{t_*} :\ t=t_*.$ Since the mapping $y\mapsto x(t_*,y)$ is injective and $\ell(t_*,x_*)\ne \emptyset,$ so there is a unique  characteristic curve passing $(t_*,x_*).$ This characteristic curve is exactly $\mathcal C.$ Therefore, we can rewrite $\mathcal C$ as follows:
$$ x=x_*+\int_{t_*}^t H_p(\tau,p_*)d\tau$$
where $p_*\in \ell(t_*,x_*).$

By assumption, $\ell^*(t_*,x_*)$ is a singleton, so is $\ell(t_*,x_*).$
Consequently, by Theorem \ref{cha1}, $\mathcal C$ is of type (I) at $(t,x)$ and $\ell(t,x)
=\{p_*\}$ for all $(t,x)\in \mathcal C, \, 0<t< t_*,$  particularly at $(t_0,x_0)$ and
then, $p_*=p_0.$ Applying Theorem 2.1 we see that $u(t,x)$ is of
class $C^1$ in $(0,t_*)\times \R^n.$
\end{proof}

Note that at some point $(t_0,x_0) \in \Omega$ where $u(t,x)$ is differentiable there may be more than one characteristic curve goes through, that is $\ell^*(t_0,x_0)$ may not be a singleton. Next, we have:

\begin{thm} Assume (A1) and (A2). Moreover, let $\sigma$  be Lipschitz on $\R^n.$ Suppose that $\ell(t_*,x)$ is a singleton for every point of the plane $\Delta^{t_*} = \{(t_*,x)\in \R^{n+1}:\  x\in \R^n\},\ 0<t_*\le T.$ Then the viscosity solution $u(t,x)$ of Problem (\ref{2.1})-(\ref{2.2}) defined
by Hopf-type formula (\ref{2.3}) is continuously differentiable in the open strip
 $(0,t_*) \times \R^n.$\end{thm}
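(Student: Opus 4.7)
My plan is to prove that $\ell(t_0,x_0)$ is a singleton at every $(t_0,x_0)\in(0,t_*)\times\R^n$, which by Theorem~2.1 will upgrade $u$ to $C^1$ on the whole strip. The strategy will be to find, for each such $(t_0,x_0)$, a point $\bar x\in\R^n$ with the property that the characteristic of type (I) emanating backward from $(t_*,\bar x)$ reaches $x_0$ at time $t_0$; Theorem~\ref{cha1}, applied backward from $(t_*,\bar x)$ where $\ell$ is already a singleton by hypothesis, will then collapse $\ell(t_0,x_0)$ to the same singleton.

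I will begin by exploiting the Lipschitz assumption on $\sigma$. By Remark~2.7, $u$ is semiconvex on $\Omega$, so $u(t_*,\cdot)$ is semiconvex on $\R^n$. The hypothesis that $\ell(t_*,x)$ is a singleton for every $x$, together with Theorem~2.3, makes $u(t_*,\cdot)$ differentiable at every point, and then semiconvexity promotes it to $u(t_*,\cdot)\in C^1(\R^n)$. I will then define the continuous field $p_*(x):=u_x(t_*,x)$, which coincides with the unique element of $\ell(t_*,x)$. Lipschitzness of $\sigma$ additionally yields the uniform bound $|p_*(x)|\le\mathrm{Lip}(\sigma)$, because $p_*(x)\in\sigma_y(\ell^*(t_*,x))$ and $|\sigma_y|$ is bounded by $\mathrm{Lip}(\sigma)$.

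The next step will be a Brouwer fixed-point argument. For a given $(t_0,x_0)$ I will consider the continuous map
$$\mathcal T(x):=x_0+\int_{t_0}^{t_*}H_p(\tau,p_*(x))\,d\tau,\qquad x\in\R^n.$$
The uniform bound on $|p_*|$ and continuity of $H_p$ on compacta show that $\mathcal T(\R^n)\subset\overline{B}(x_0,M)$ for some $M>0$, so $\mathcal T$ restricts to a continuous self-map of the closed ball $\overline{B}(x_0,M)$ and admits a fixed point $\bar x$. The characteristic of type (I) at $(t_*,\bar x)$,
$$y(t)=\bar x+\int_{t_*}^{t}H_p(\tau,p_*(\bar x))\,d\tau,\qquad t\in[0,t_*],$$
then satisfies $y(t_0)=x_0$ by the fixed-point identity. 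Applying Theorem~\ref{cha1} to this characteristic will give $p_*(\bar x)\in\ell(t,y(t))$ and $\ell(t,y(t))\subset\ell(t_*,\bar x)=\{p_*(\bar x)\}$ for all $t\in[0,t_*]$, so $\ell(t_0,x_0)=\{p_*(\bar x)\}$ is the desired singleton.

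I expect the main obstacle to be justifying the upgrade from ``differentiable everywhere'' to ``$C^1$'' for $u(t_*,\cdot)$, since continuity of $p_*$ is what underwrites both the continuity of $\mathcal T$ (needed for Brouwer) and the legitimacy of the characteristic $y(t)$. This rests on the classical fact that where a convex function is differentiable its gradient coincides with the unique element of the outer-semicontinuous subdifferential; applying this to the convex function $u(t_*,x)+\lambda|x|^2$ yields the required continuity of $p_*$, after which the remaining steps become essentially routine.
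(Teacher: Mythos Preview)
Your proposal is correct and follows essentially the same route as the paper: a Brouwer fixed-point argument on the map $x\mapsto x_0+\int_{t_0}^{t_*}H_p(\tau,p(x))\,d\tau$ produces a point $\bar x$ whose type-(I) characteristic from $(t_*,\bar x)$ reaches $(t_0,x_0)$, after which Theorem~\ref{cha1} forces $\ell(t_0,x_0)\subset\ell(t_*,\bar x)=\{p(\bar x)\}$. The only cosmetic difference is that the paper obtains the continuity of $x\mapsto p(x)$ directly from the upper semicontinuity of the set-valued map $\ell$ (a single-valued u.s.c.\ map is continuous) rather than through semiconvexity of $u(t_*,\cdot)$, and bounds $|p(x)|$ via the boundedness of $\mathrm{dom}\,\sigma^*$---which is equivalent to your Lipschitz bound.
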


\begin{proof}
By assumption, the function $\sigma(x)$ is convex and Lipschitz  on $\R^n,$ then $D=$
 dom\, $ \sigma^*  = \{ q \in \R^n \, |\ \sigma^*(q) < +\infty \} $
is a bounded (and convex) subset in $\R^n.$ We thus  have $\ell(t,x) \subset D$ for all $(t,x) \in
\Omega.$ 

Let $(t_0,x_0) \in (0,t_*)\times \R^n.$ For each $y\in \R^n,$ we put
$$\Lambda (y)=x_0-\int_{t_*}^{t_0} H_p(\tau, p(y))d\tau,$$
where $p(y)\in \ell(t_*,y)\in D.$ Since the multi-valued function  $y\mapsto \ell(t_*,y)$ is u.s.c, and by the hypothesis, $\ell(t_*,y)=\{p(y)\} $ is a singleton for all $y\in \R^n,$ we deduce that the single-valued function $y\mapsto p(y)$ is continuous. Therefore the function $\Lambda:  \R^n\to\R^n,$ defined by 
$$  y\mapsto \Lambda (y)=x_0-\int_{t_*}^{t_0} H_p(\tau,p(y))d\tau$$ is also continuous on $\R^n.$
\smallskip

Since $p(y)$ is in the bounded set $D$ and $H_p(t,p)$ is continuous, there exists $M>0$ such that
$$|\Lambda (y)-x_0|\le \int_{t_0}^{t_*} |H_p(\tau,p(y)|d\tau \le M.$$
Therefore $\Lambda$ is a continuous function from the closed ball $B'(x_0,M)$ into itself. By Brouwer theorem, $\Lambda $ has a fixed point $x_*\in B'(x_0,M), $ i.e., $\Lambda (x_*)=x_*,$ hence
$$x_0=x_*+\int_{t_*}^{t_0} H_p(\tau, p(x_*))d\tau.$$

In other words, there exists a characteristic curve $\mathcal C$ of the type (I) at $(t_*,x_*)$ described as in Theorem \ref{cha1} passing $(t_0,x_0)$. Since $\ell(t_*,x_*)$ is a singleton, so is $\ell(t_0,x_0).$ Applying Theorem 2.1, we see that $u(t,x)$ is continuously differentiable in $(0,t_*)\times\R^n.$
\end{proof}

We note that the hypotheses of above theorems are equivalent to the
fact that, there is a unique characteristic curve of type (I) at
point $(t_*,x_*),\ x_*\in \R^n$ going through $(t_0,x_0).$ In general,
at some point $(t_0,x_0) \in (0,t_*)\times \R^n$ where $u(t,x)$ is
differentiable there may be two or more characteristic curves of
type (I) or (II) at such point $(t_*,x_*),$ one among them goes through $(t_0,x_0),$ thus
$\ell^*(t_*,x_*)$ may not be a singleton. Even neither is
$\ell(t_*,x_*).$ Nevertheless, we have:
\medskip

\begin{thm} Assume (A1) and (A2). In case that $H(t,\cdot)$ is a concave function, we assume in addition that $H(t,\cdot)$ is strictly concave for a.e. $t$ in $(0,T).$  Let $u(t,x)$ be the viscosity solution of Problem (\ref{2.1})-(\ref{2.2}) defined by Hopf-type formula. Suppose that there exists $t_* \in (0,T)$ such that all characteristic curves passing $(t_*,x),\ x\in \R^n$ are of type (I). Then $u(t,x)$ is continuously differentiable in the open strip $(0,t_*)\times \R^n.$
\end{thm}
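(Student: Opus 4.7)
The plan is to reduce the claim to Theorem 2.1 by proving that $\ell(t_0,x_0)$ is a singleton for every $(t_0,x_0)\in(0,t_*)\times\R^n$. Fix such a point and pick an arbitrary $p_0\in\ell(t_0,x_0)$. Using the inclusion $\ell(t_0,x_0)\subset\sigma_y(\ell^*(t_0,x_0))$ recalled before Proposition \ref{dt}, I choose $y_0\in\ell^*(t_0,x_0)$ with $\sigma_y(y_0)=p_0$; the associated characteristic curve, rewritten as in Proposition \ref{dt}, is
$$\mathcal{C}:\ x=x(t)=x_0+\int_{t_0}^t H_p(\tau,p_0)\,d\tau,\qquad t\in[0,T],$$
and by construction $\mathcal{C}$ is of type (I) at $(t_0,x_0)$.

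Next, I push $\mathcal{C}$ forward to the slice $\{t=t_*\}$ by setting $x_*:=x(t_*)$. Then $\mathcal{C}$ is a characteristic curve through $(t_*,x_*)\in\Omega$, so the standing hypothesis that \emph{every} characteristic through $(t_*,x)$ is of type (I) forces $\mathcal{C}$ to be of type (I) at $(t_*,x_*)$ as well; in particular $p_0=\sigma_y(y_0)\in\ell(t_*,x_*)$.

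Finally, I apply the strengthened Theorem \ref{cha2} at the point $(t_*,x_*)$ with reference characteristic $\mathcal{C}$: since $p_0\in\ell(t_*,x_*)$ and $\mathcal{C}$ is of type (I) there, one obtains $\ell(t,x)=\{p_0\}$ for every $(t,x)\in\mathcal{C}$ with $0\le t<t_*$. Specializing to $(t,x)=(t_0,x_0)$ gives $\ell(t_0,x_0)=\{p_0\}$, and since $p_0\in\ell(t_0,x_0)$ was arbitrary, this set is a singleton. Theorem 2.1 then yields $u\in C^1((0,t_*)\times\R^n)$.

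The main potential obstacle is the step that propagates the \emph{singleton} property backwards from $t=t_*$ to $t=t_0$: once $p_0$ is known to be a maximizer at the \emph{later} point $(t_*,x_*)$ of the type-(I) characteristic $\mathcal{C}$, Theorem \ref{cha2} upgrades it to the \emph{unique} maximizer at every earlier point of $\mathcal{C}$. This is precisely where the strict-concavity refinement of (A2) is indispensable; without it, Theorem \ref{cha1} would only give $\ell(t_0,x_0)\subset\ell(t_*,x_*)$, which is not strong enough to force a singleton. The rest of the argument is just the bookkeeping needed to realize an arbitrary $p_0\in\ell(t_0,x_0)$ as a type-(I) direction at some $(t_*,x_*)$, which Proposition \ref{dt} together with the recentring identity $x_*-\int_0^{t_*}H_p(\tau,p_0)\,d\tau=y_0\in\partial\sigma^*(p_0)$ makes automatic.
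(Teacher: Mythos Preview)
Your argument is correct and follows essentially the same route as the paper: pick $p_0\in\ell(t_0,x_0)$, run the associated characteristic $\mathcal{C}$ forward to the slice $t=t_*$, invoke the hypothesis to see that $\mathcal{C}$ is of type (I) there, and then apply Theorem~\ref{cha2} to conclude $\ell(t_0,x_0)=\{p_0\}$, whence Theorem~2.1 gives $u\in C^1((0,t_*)\times\R^n)$. The only cosmetic difference is that you frame the final step as ``$p_0$ was arbitrary,'' which is redundant once Theorem~\ref{cha2} has already yielded $\ell(t_0,x_0)=\{p_0\}$.
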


\begin{proof}

We argue similarly to the proof of Theorem 3.1. Let $(t_0,x_0)\in (0,t_*)\times \R^n$ and let $\mathcal C:$
$$ x=x_0+\int_{t_0}^t H_p(\tau,p_0)d\tau$$
where $p_0=\sigma_y(y_0)\in \ell(t_0,x_0)$  be the characteristic curve going through $(t_0,x_0)$ defined as in Proposition 2.5.

Let $(t_*,x_*)$ be the intersection point of $\mathcal C$ and plane $\Delta^{t_*} :\ t=t_*.$  Then we have
$$ x_*=x_0+\int_{t_0}^{t_*} H_p(\tau,p_0)d\tau$$
Therefore, we can rewrite $\mathcal C$ as
$$ x=x_*-\int _{t_0}^{t_*}H_p(\tau,p_0)d\tau  + \int_{t_0}^t H_p(\tau,p_0)d\tau = x_*+\int_{t_*}^t H_p(\tau,p_0)d\tau,$$
then $\mathcal C$ is also a characteristic curve passing $(t_*,x_*).$ By assumption, $\mathcal C$ is of type (I) at this point, so all $(t,x)\in \mathcal C, \ 0\le t <t_*$ are regular by Theorem 2.10. Thus, $\ell(t_0,x_0)$ is a singleton. As before, we come to the conclusion of the theorem.
\end{proof}

\smallskip

Next, we study a simple propagation of singularities of viscosity solution $u(t,x)$ of the Cauchy problem (2.1)-(2.2) defined by Hopf-type formula.

\begin{thm} Assume (A1) and (A2). Let $(t_0,x_0)\in \Omega$ be a singular point of Hopf-type formula $u(t,x).$ Then for each $\epsilon >0$ there exists $\delta >0$ such that for any $t_*>t_0,\ |t_*-t_0|\le \delta,$ there exists $x_*\in B'(x_0,\epsilon)$ such that $(t_*,x_*)$ is also a singular point.
\end{thm}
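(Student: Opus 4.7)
The plan is to argue by contradiction, using Brouwer's fixed-point theorem to produce a type (I) characteristic that joins a hypothetically regular later slab back to $(t_0,x_0)$, and then invoking the set-inclusion conclusion of Theorem \ref{cha1} to force $\ell(t_0,x_0)$ to be a singleton, contradicting the singularity. Specifically, suppose the conclusion fails: there is $\epsilon>0$ such that for arbitrarily small $\delta>0$ one can find $t_*\in(t_0,t_0+\delta]$ for which every $(t_*,x)$, $x\in B'(x_0,\epsilon)$, is regular. By Theorem 2.1, $\ell(t_*,x)=\{q(x)\}$ is a singleton for each such $x$, and upper semicontinuity of $\ell$ combined with singleton-valuedness makes $q:B'(x_0,\epsilon)\to\R^n$ continuous on the closed ball. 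Assumption (A1), applied on the relevant compact set, also yields a uniform bound $M$ with $|H_p(\tau,q(x))|\le M$ for $x\in B'(x_0,\epsilon)$ and $\tau$ near $t_0$.

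Next I consider the continuous self-map $\Phi:B'(x_0,\epsilon)\to\R^n$ defined by
$$\Phi(x)=x_0+\int_{t_0}^{t_*}H_p(\tau,q(x))\,d\tau,$$
for which $|\Phi(x)-x_0|\le (t_*-t_0)M$. Choosing $\delta$ so small that $\delta M<\epsilon$, and picking the associated $t_*$ granted by the contradictory hypothesis, $\Phi$ sends $B'(x_0,\epsilon)$ into itself, and Brouwer's theorem yields a fixed point $x^*$. Setting $p^*:=q(x^*)\in\ell(t_*,x^*)$, the fixed-point equation $\Phi(x^*)=x^*$ says precisely that the curve $x(t)=x^*+\int_{t_*}^tH_p(\tau,p^*)\,d\tau$ passes through $(t_0,x_0)$ at time $t=t_0$. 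Because $p^*\in\ell(t_*,x^*)\subset\sigma_y(\ell^*(t_*,x^*))$, this curve is a type (I) characteristic at $(t_*,x^*)$. The ``moreover'' part of Theorem \ref{cha1}, applied with base point $(t_*,x^*)$, then gives $\ell(t,x(t))\subset\ell(t_*,x^*)=\{p^*\}$ for every $t\in[0,t_*]$; specializing to $t=t_0$ reads $\ell(t_0,x_0)\subset\{p^*\}$, so $\ell(t_0,x_0)=\{p^*\}$ since it is nonempty. By Theorem 2.1 the point $(t_0,x_0)$ is regular, a contradiction.

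The essential difficulty is setting up the Brouwer machinery cleanly. One must verify continuity of $q$ on the \emph{closed} ball (which flows from u.s.c.\ plus the singleton-valued property), extract from (A1) a uniform bound on $q$ across all of $B'(x_0,\epsilon)$ simultaneously (possibly by first shrinking $\epsilon$ so that the constants $r,N$ from (A1) at $(t_0,x_0)$ cover the whole ball), and confirm that the fixed-point curve is a genuine type (I) characteristic so that the set-inclusion in Theorem \ref{cha1} is available. Once these bookkeeping steps are in place the inclusion $\ell(t_0,x_0)\subset\{p^*\}$ closes the argument in one line, and the same argument works uniformly in $t_*\in(t_0,t_0+\delta]$ once $\delta M<\epsilon$.
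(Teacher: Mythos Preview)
Your proposal is correct and follows essentially the same route as the paper: both proofs extract from (A1) a uniform bound on $\ell$ over the relevant compact set, choose $\delta$ so that the map $x\mapsto x_0+\int_{t_0}^{t_*}H_p(\tau,q(x))\,d\tau$ sends $B'(x_0,\epsilon)$ into itself, apply Brouwer to obtain a fixed point $x^*$, identify the resulting curve as a type (I) characteristic at $(t_*,x^*)$, and then use the inclusion $\ell(t_0,x_0)\subset\ell(t_*,x^*)$ from Theorem~\ref{cha1} to force $\ell(t_0,x_0)$ to be a singleton. The only cosmetic differences are that the paper organizes the contradiction by first fixing $\delta$ (via the compact set $E=[t_0,T]\times B'(x_0,\epsilon)$) and then an arbitrary $t_*$, whereas you negate the full quantifier string up front; and the paper obtains the uniform bound on $\ell$ by a finite-cover argument on $E$ rather than by shrinking $\epsilon$ to fit inside the $(r,N)$-neighborhood from (A1).
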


\begin{proof}  We use an idea of the proof of Lemma 6.5.1 in \cite {cs} with an appropriate adjustment. Let $(t_0,x_0)\in \Omega$ and let $\epsilon >0.$ Under assumption (A1), for all $(t,x)\in E= [t_0, T]\times B'(x_0,\epsilon)$ there exist positive numbers $r_{tx}$ and $N_{tx}$ such that for all $(t',x')$ satisfying $|t'-t|+|x'-x| <r_{tx}$ then $\ell(t',x')\subset B'(0,N_{tx}).$ Hence, we can cover the compact set $E$ by a finite number balls  centered at $(t,x)_i)$ with radii $r_{(tx)_i},\ i=1,\dots, k.$ We take  positive number $M =\max \{N_{(tx)_i}, \ i=1,\dots, k\},$ then for all $(t,x) \in E$ we get $\ell(t,x)\subset B'(0, M).$ Now we choose $ \delta \in (0,T-t_0]$ satisfying 
$$\delta\, \sup_{|t-t_0|\le T-t_0, |p|\le M}\, |H_p(t,p)| \le \epsilon$$
and fix a $t_*>t_0$ so that $t_*-t_0\le \delta.$
\smallskip

By contradiction, if every point $(t_*, y)$ where $y\in B'(x_0,\epsilon)$ is regular, then $\ell(t_*,y) = \{p(y)\}$ is a singleton. Since the multi-valued function  $y\mapsto \ell(t_*,y)$ is u.s.c, then $y\mapsto p(y)$ is continuous on $B'(x_0,\epsilon).$ Thus, as in the proof of Theorem 3.2, we see that the function $\R^n \ni y\mapsto \Lambda (y)=x_0-\int_{t_*}^{t_0} H_p(\tau,p(y))d\tau$ is also continuous.
\medskip

Note that, if $y\in B'(x_0,\epsilon)$ then
$$|\Lambda (y)-x|\le \int_{t_0}^{t_*} |H_p(\tau,p(y)|d\tau \le \delta\, \sup_{|t-t_0|\le T-t_0, |p|\le M}\, |H_p(t,p)| \le \epsilon.$$
Therefore $\Lambda$ is a continuous function from the closed ball $B'(x_0,\epsilon)$ into itself. By Brouwer theorem, $\Lambda $ has a fixed point $x_*\in B'(x_0,\epsilon), $ i.e., $\Lambda (x_*)=x_*,$ hence,
$$x_0=x_*+\int_{t_*}^{t_0} H_p(\tau, p(x_*))d\tau.$$

In other words, there exists a characteristic curve $\mathcal C$ of the type (I) at $(t_*,x_*)$ described as in Theorem 2.9 passing $(t_0,x_0)$. Since $\ell(t_*,x_*)$ is a singleton, so is $\ell(t_0,x_0).$ This contradicts to the hypothesis.
\end{proof}

\begin{rem} If  $(t_0,x_0)\in \Omega$ is a singular point for $u(t,x)$ and $\epsilon >0,$ then there exists $\delta_1=\delta >$ such that for any $t\in [t_0,t_0+\delta_0] $ we can pick out $x=x(t)\in B'(x_0,\epsilon)$ so that $(t,x)$ is singular. Put $x_1=x(t_1)$ where $t_1=t_0+\delta_1.$ By induction, we can find $(\delta_k)_k$ and $x_k=x(t_k), \ t_k=t_{k-1}+\delta_k$ so that $(t_k,x_k)$ is singular. Since $\delta_k>0$ is dependent on $(t_k,x_k)$ there are two possibilities:
$$\sum_{k=1}^\infty \delta_k <T  \ \text{ or }\  \sum_{k=1}^\infty \delta_k \ge T.$$

In the first case, the singularities of $u(t,x)$ constructed by this way may not propagate to the boundary $t=T,$ otherwise the singularities of $u(t,x)$ exist at some points $(T, x_*).$ Nevertheless, if we assume $\sigma (x)$ is Lipschitz on $\R^n$ as an additional condition, then the number $\delta >0$ in the proof above, can be chosen independently of $(t_i,x_i),\ i=1,2,\dots$
\end{rem}
We have the following:

\begin{thm}
Assume (A1), (A2). Moreover, let $\sigma (x)$  be a Lipschitz function on $\R^n.$ For each $\epsilon >0$ there exists $\delta >0$ such that if $(t_0,x_0)$ is a singular point for $u(t,x),$ then
for any $t_1\in [t_0,t_0+\delta]$ there exists $x_1\in B'(x_0,\epsilon)$ such that $(t_1,x_1)$ is also a singular point.
\end{thm}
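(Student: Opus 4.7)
The plan is to repeat the proof of Theorem 3.4, but exploit the Lipschitz hypothesis on $\sigma$ to upgrade the local bound on $\ell(t,x)$ to a global one, which in turn makes $\delta$ uniform in the singular point $(t_0,x_0)$.

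First I would observe that, since $\sigma$ is Lipschitz on $\R^n$, the effective domain $D = \text{dom}\, \sigma^*$ is a \emph{bounded} convex subset of $\R^n$, and $\ell(t,x)\subset D$ for every $(t,x)\in\Omega$. Let $M>0$ be such that $D\subset B'(0,M)$. Because $H_p$ is continuous on the compact cylinder $[0,T]\times B'(0,M)$, the quantity
\[
K \;=\; \sup\bigl\{\,|H_p(\tau,p)| \,:\, \tau\in[0,T],\ |p|\le M\,\bigr\}
\]
is finite and depends only on the data $H,\sigma,T$, not on any point of $\Omega$. This is the crucial difference from Theorem 3.4, where the analogue of $M$ was extracted from the local assumption (A1) around a particular $(t_0,x_0)$ and thus varied with the point.

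Given $\epsilon>0$, I would then set $\delta = \epsilon/K$ (shrinking if necessary so that $\delta\le T$). Fix an arbitrary singular point $(t_0,x_0)\in\Omega$ and $t_1\in[t_0,t_0+\delta]$ with $t_1<T$. Suppose for contradiction that every point $(t_1,y)$ with $y\in B'(x_0,\epsilon)$ is regular. Then $\ell(t_1,y)=\{p(y)\}$ is a singleton, and upper semicontinuity of the multifunction $\ell$ (established in \cite{vht}) forces $y\mapsto p(y)$ to be continuous on $B'(x_0,\epsilon)$. Since $p(y)\in D\subset B'(0,M)$, the map
\[
\Lambda(y) \;=\; x_0 - \int_{t_1}^{t_0} H_p(\tau,p(y))\,d\tau
\]
is continuous and satisfies $|\Lambda(y)-x_0|\le (t_1-t_0)K \le \delta K \le \epsilon$, so $\Lambda$ carries $B'(x_0,\epsilon)$ into itself. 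By Brouwer's theorem there is a fixed point $x_1\in B'(x_0,\epsilon)$, giving $x_0 = x_1 + \int_{t_1}^{t_0} H_p(\tau,p(x_1))\,d\tau$. This produces a characteristic curve of type (I) at $(t_1,x_1)$ passing through $(t_0,x_0)$, and since $\ell(t_1,x_1)$ is a singleton, Theorem \ref{cha1} forces $\ell(t_0,x_0)\subset\ell(t_1,x_1)$ to be a singleton too, contradicting the singularity of $(t_0,x_0)$.

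The main obstacle is really conceptual rather than technical: one must notice that the Lipschitz assumption on $\sigma$ converts the pointwise growth bound $N$ of condition (A1) into a single global bound $M$ on $\bigcup_{(t,x)}\ell(t,x)$, which in turn yields the uniform bound $K$ on $|H_p|$ along every trajectory that can possibly arise. Once this is in place, the Brouwer fixed-point argument from Theorem 3.4 goes through verbatim with $\delta=\epsilon/K$ depending only on $\epsilon$ and the data.
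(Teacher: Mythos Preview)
Your proposal is correct and follows essentially the same argument as the paper: both use the Lipschitz hypothesis on $\sigma$ to get a uniform bound $M$ on $\bigcup_{(t,x)}\ell(t,x)$ via boundedness of $\operatorname{dom}\sigma^*$, choose $\delta$ so that $\delta\cdot\sup_{[0,T]\times B'(0,M)}|H_p|\le\epsilon$, and then run the Brouwer fixed-point contradiction from Theorem~3.4 verbatim. Your explicit invocation of Theorem~\ref{cha1} to conclude $\ell(t_0,x_0)\subset\ell(t_1,x_1)$ is a slight clarification over the paper's more compressed ``since $\ell(t_*,x_*)$ is a singleton, so is $\ell(t_0,x_0)$''.
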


\begin{proof}
Since $\sigma(x)$ is convex and Lipschitz, then $D=$\ dom$\sigma^*$ is bounded. Hence, $D\subset B'(0,M)$  for some positive number $M.$ Choose a fixed number $\delta >0$ such that 
$$\delta\, \sup_{0\le t\le T, |p|\le M}\, |H_p(t,p)| \le \epsilon.$$

We argue similarly to the proof of Theorem 3.4. Let $(t_0,x_0)$ be a singular point for $u(t,x).$ If there is $t_*\in (t_0, t_0+\delta]$ such that $(t_*,y)$ is regular for all $y\in B'(x_0,\epsilon)$ then the mapping
$$ y\mapsto \Lambda (y)=x_0-\int_{t_*}^t H_p(\tau,p(y))d\tau$$
is continuous from $B'(x_0,\epsilon)$ into itself. Thus, the mapping has a fixed point $x_*\in B'(x_0,\epsilon).$ This implies that there is a characteristics $\mathcal C$ of type (I) at $(t_*,x_*)$ passing $(t_0,x_0)$ and so $(t_0,x_0)$ is regular. This is a contradiction.
\end{proof}

\begin {cor} Assume (A1), (A2) and $\sigma(x)$ is Lipschitz on $\R^n.$ If $u(t,x)$ has a singular point $(t_0,x_0)\in \Omega,$ then for any $\epsilon >0$ and $t>t_0,$ we can find another singular point $(t,x)$ such that $|x-x_0|\le m\epsilon,$  for some $m\in \N.$ Therefore the singular points of $u(t,x)$ propagate with respect to $t$ as $t$ tends to $T.$
\end{cor}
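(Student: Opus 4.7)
The plan is to iterate Theorem 3.5 a controlled number of times. The key feature of that theorem is that the $\delta>0$ it produces depends only on $\epsilon$ and not on the particular singular point, so the induction below will be legitimate.

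First, fix $\epsilon>0$ and let $\delta>0$ be the constant furnished by Theorem 3.5 for this $\epsilon$. Given $t>t_0$, choose $m\in\N$ with $m\ge (t-t_0)/\delta$ and define the partition $t_k=t_0+k(t-t_0)/m$ for $k=0,1,\dots,m$, so that $t_k-t_{k-1}\le\delta$ and $t_m=t$. (If $t-t_0\le\delta$ one may simply take $m=1$.)

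Second, I would construct inductively a sequence of singular points $(t_k,x_k)$ as follows. The base case is the given $(t_0,x_0)$. For the inductive step, assuming $(t_{k-1},x_{k-1})$ is singular, apply Theorem 3.5 to this point: since $t_k\in[t_{k-1},t_{k-1}+\delta]$, there exists $x_k\in B'(x_{k-1},\epsilon)$ such that $(t_k,x_k)$ is singular. The triangle inequality then gives
\[
|x_m-x_0|\le\sum_{k=1}^{m}|x_k-x_{k-1}|\le m\epsilon,
\]
and $(t,x_m)=(t_m,x_m)$ is the desired singular point, proving the first assertion.

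For the final sentence about propagation toward $t=T$: given any $t\in(t_0,T)$, the construction above produces a singular point at time $t$, so singular points exist at every time level strictly between $t_0$ and $T$. In particular, one can take a sequence $t^{(j)}\nearrow T$ with corresponding singular points, showing the singular set accumulates toward the boundary $\{t=T\}$. There is no real obstacle here — the only thing one must check is that the uniformity of $\delta$ in Theorem 3.5 is preserved throughout the induction, which it is because $\delta$ depends only on $\epsilon$ and on the global bounds $M$ and $\sup|H_p|$ used in the proof of Theorem 3.5, none of which changes as the iteration proceeds.
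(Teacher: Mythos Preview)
Your argument is correct and is essentially the same as the paper's: iterate the uniform propagation step a finite number of times and sum the displacements via the triangle inequality. Two minor remarks: the theorem you invoke is numbered~3.6 in the paper (item~3.5 is a Remark), and your equal-step partition $t_k=t_0+k(t-t_0)/m$ is in fact slightly cleaner than the paper's choice $t_i=t_0+i\delta$, which requires an extra half-step at the end.
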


\begin{proof}
Arguing as in Remark 3.5, we see that for $\epsilon >0$ and $t_0< t\le T,$ there is $m\in \N$ such that $m\delta <t\le (m+1)\delta,$ where $\delta >0$ is defined as in Theorm 3.6. Let $t_i=i\delta, i=0,\dots, m.$  After $m$ steps, we can take $x_m\in B'(x_{m-1},\epsilon)$ such that $(t,x_m)$ is singular and then
$$|x_m-x_0|\le |x_m-x_{m-1}| +\dots +|x_1-x_0| \le m\epsilon.$$

The proof is thus complete.
\end{proof}

\noindent{\bf Example.}  We consider the following problem
$$\frac{\partial u}{\partial t}-2t\ln(1+u_x^2) =0,\
t>0,\ x\in \R ,$$
$$u(0,x)=\frac{x^2}{2},\ x\in \R.$$

A  viscosity solution defined by Hopf-type formula of this problem is:
$$u(t,x)=\max_{y\in \R}\, \{xy-\frac{y^2}{2}+t^2 \ln (1+y^2)\}$$
Let $\varphi (t,x,y)=xy -\frac {y^2}{2} +t^2\ln (1+y^2),$ then $\varphi_y(t,x,y)=x-y+\frac{2t^2y}{1+y^2}.$

A simple computation shows that at point $(t_0,x_0)= (\sqrt 2, \frac 25),$ we have $ \varphi_y(\sqrt 2,\frac 25,y)=0\  \Leftrightarrow\  y_1=2; \ y_2=\frac{ -4+ \sqrt {11}}5,\  y_3=\frac{ -4- \sqrt {11}}5$ and the function $\varphi (t_0,x_0,y)$ attains its maximum at $y_1=2.$
\smallskip

There are three characteristic curves that go through the point $(\sqrt 2, \frac 25)$ as follows:

$\mathcal C_1: \ x=2-\frac {4t^2} 5,\quad  \text{starting at} \ y=2$ and

$\mathcal C_{i} = y_{i} -\frac{2y_{i}t^2}{1+y_{i}^2},\ i=2,\, 3, \  \text{starting at}\ y_{2}= \frac{ -4+ \sqrt {11}}5,\ y_{3}= \frac{ -4- \sqrt {11}}5 .$

We see that $\mathcal C_1$ is the characteristic curve of type (I) at $(t_0,x_0)=(\sqrt 2, \frac 25)$ and $\mathcal C_{2}, \mathcal C_{3}$ are the characteristic curves of type (II) at this point since $\ell (\sqrt 2, \frac 25)=\{\sigma'(y_1)\} =\{2\}$ and $\sigma_y(y_i)\notin \ell(\sqrt 2, \frac 25),\ i=2,\, 3.$ Therefore, $(\sqrt 2,\frac 25)$ is a regular point of $u(t,x)$ although there are 3 characteristic curves intersecting at this point.

\smallskip
Now let $(t_1, x_1)=(t_1,0)$ and let the characteristics $\mathcal C$ starting $y\in \R$ go through $(t_1,0).$ Then $y$ is a root of equation
$y-\frac{2t_1^2y}{1+y^2}=0.$

If $0\le t_1\le \frac 1{\sqrt 2}$ then $(t_1,0)$ is regular point of $u(t,x)$ and $\mathcal C_1:\ x=0$ is of type (I) at $(t_1,0).$

If  $t_1> \frac 1{\sqrt 2}$ then $(t_1,0)$ is singular, since $\ell(t_1,0) =\{y_2,\, y_3\},$ where $y_2=\sqrt{2t^2_1 -1},\ y_3=-\sqrt{2t^2_1-1}.$ In this case, the characteristic curves $\mathcal C_2$ and $\mathcal C_3$ starting at $y_2$ and $y_3$ are of type (I), and $\mathcal C_1$ is of type (II) at $(t_1, 0).$

Let $t_* = \frac 1{\sqrt 2}.$ We have  $\varphi (\frac 1{\sqrt 2},x,y)=xy -\frac {y^2}{2} +\frac 12\ln (1+y^2),$ then $\varphi'_y(\frac 1{\sqrt 2},x,y)=x-y+\frac{y}{1+y^2}$ and $\varphi''_y(\frac 1{\sqrt 2},x,y)=-y^2\frac{3+y^2}{(1+y^2)^2} <0, \ y\ne 0.$ Therefore $\ell(  \frac 1{\sqrt 2},x)$ is a singleton for all $x\in \R.$ Applying Theorem 3.2, we see that the solution $u(t,x)$ is continuously differentiable on the strip $(0, \frac 1{\sqrt 2})\times \R^n.$
\smallskip

At last, the segment $x=0;\ t\in (\frac 1{\sqrt 2},T]$ is a set of singular points for $u(t,x).$ So the singularities of $u(t,x)$ propagate to the boundary.


\end{document}